\documentclass[11pt]{elsarticle}

\makeatletter
\def\ps@pprintTitle{%
 \let\@oddhead\@empty
 \let\@evenhead\@empty
 \def\@oddfoot{}%
 \let\@evenfoot\@oddfoot}

\usepackage[left=2.5cm,right=2.5cm,top=2cm,bottom=2cm]{geometry}
\usepackage{amsmath,amssymb,amsfonts,amsthm,bm}
\usepackage{xcolor,comment}
\usepackage{soul}
\soulregister\ref7
\soulregister\citealt7
\usepackage{enumerate}
\usepackage{hyperref}
\usepackage[ruled,lined,shortend,linesnumberedhidden]{algorithm2e}
\usepackage{graphicx}
\usepackage[section]{placeins}
\usepackage{epstopdf}
\usepackage{framed}
\usepackage{multirow}
\usepackage{float}
\usepackage[caption=false]{subfig}

\newtheorem{proposition}{Proposition}[section]
\newtheorem{definition}{Definition}[section]
\newtheorem{lemma}{Lemma}[section]
\newtheorem{remark}{Remark}[section]

\newtheorem{assumption}{Assumption}

\numberwithin{equation}{section}

\begin{document}

\begin{frontmatter}

\title{Efficient Nehari Manifold Optimization Algorithms for Computing Ground State Solutions of Nonlinear Elliptic Systems}
\author[hunnu,sjtu]{Zhaoxing Chen}\ead{helloczx@sjtu.edu.cn}
\author[nudt]{Wei Liu}\ead{wl@nudt.edu.cn}
\author[hunnu]{Ziqing Xie}\ead{ziqingxie@hunnu.edu.cn}
\author[hnu]{Wenfan Yi}\ead{wfyi@hnu.edu.cn}

\address[hunnu]{MOE-LCSM, School of Mathematics and Statistics, Institute of Interdisciplinary Studies, Hunan Normal University, Changsha 410081, China}
\address[sjtu]{Institute of Natural Sciences, Shanghai Jiao Tong University, Shanghai 200240, China}
\address[nudt]{College of Science, National University of Defense Technology, Changsha 410073, China}
\address[hnu]{Hunan Provincial Key Laboratory of Intelligent Information Processing and Applied Mathematics, School of Mathematics, Hunan University, Changsha 410082, China}

\begin{abstract}
This paper presents a class of efficient  manifold optimization algorithms for computing the ground state solutions of a semilinear elliptic system, which  are unstable saddle points of the variational functional. Variational arguments show that these unstable saddle points can be characterized as the local minimizers of the  variational functional constrained to the Nehari manifold $\mathcal{N}$. The Nehari manifold optimization method (NMOM) proposed in [Z. Chen, W. Liu, Z. Xie, and W. Yi. {\em SIAM J. Sci. Comput.}, 47(4): A2098-A2126, 2025] provides a Riemannian gradient descent framework on $\mathcal{N}$  for such constrained minimization problems. To  deal with both the intrinsic instability of the solutions and the increased computational complexity introduced by the coupling between components, we combine the  ideas from the NMOM  and the Nesterov-type acceleration to develop a new  efficient Riemannian accelerated gradient algorithm on $\mathcal{N}$ (RAG-$\mathcal{N}$). The  key idea is to perform an easy-to-implement nonlinear extrapolation step on $\mathcal{N}$, followed by a Riemannian steepest-descent update at the extrapolated point. To enhance the robustness, we further incorporate a nonmonotone step-size search strategy into the RAG-$\mathcal{N}$ algorithm,  obtaining a  variant with improved stability. Numerical experiments show that the RAG-$\mathcal{N}$  algorithms substantially reduce the number of iterations compared with the Riemannian steepest descent algorithm of NMOM. Finally, we apply the RAG-$\mathcal{N}$ algorithms to compute the ground state solutions of semilinear elliptic systems with two, three and four components, and investigate their behavior under different coupling  coefficients and  various settings, including Gaussian-type external potentials and singular diffusion coefficients.
\end{abstract}

\begin{keyword}
semilinear elliptic system,
ground state solutions,
Nehari manifold, 
Nesterov-type acceleration, 
nonlinear extrapolation, 
Riemannian accelerated gradient algorithm
\end{keyword}

\end{frontmatter}

\section{Introduction}
Let $H = H_0^1(\Omega,\mathbb{R}^m)$ be the Sobolev space, and consider the following $m$-coupled semilinear elliptic system for $\mathbf{u} = (u_1,u_2,\ldots,u_m)\in H$ ~\cite{DANCER2010953,Sato2015,SOAVE2015,YuBo2020}
\begin{align}\label{eq:model}
\left\{\begin{aligned}
&-\varepsilon_i\Delta u_i(\mathbf{x}) + a_i(\mathbf{x}) u_i(\mathbf{x}) =\sum_{j=1}^m  g_{ij}(u_j(\mathbf{x}))^2 u_i(\mathbf{x}),\quad \mathbf{x} \in \Omega, \\
&u_i(\mathbf{x}) = 0,\quad \mathbf{x} \in \partial \Omega,
\end{aligned}\right. \quad i=1,2,\ldots,m.
\end{align}
Here, $\Omega\subset\mathbb{R}^N$ is a bounded domain with Lipschitz boundary $\partial\Omega$ and the parameters $\varepsilon_i\in \mathbb{R}^+$ ($1\leq i\leq m$) are diffusion coefficients. The coupling constants $g_{ij}$ and functions $a_i(\mathbf{x})$ ($1\leq i,j\leq m$) are given to satisfy the following basic assumptions such that the associated variational functional admits a suitable variational structure.
\begin{assumption}\label{ass:A}
Assume that the matrix $\mathbf{g} :=  \left(g_{ij}\right)_{i,j=1}^m\in \mathbb{R}^{m\times m}$ is symmetric, i.e., $g_{ij}=g_{ji}$, and satisfies either the {\em fully cooperative condition:} $g_{ij}>0$ $\forall\,1\leq i,j\leq m$, or the {\em positively coupled condition:} $\mathbf{g}$ is positive-definite, and the function ${\bf a}:=(a_1(\mathbf{x}),a_2(\mathbf{x}),\ldots,a_m(\mathbf{x}))\in L^{\infty}(\Omega,\mathbb{R}^m)$ satisfies $a_i(\mathbf{x})> -\lambda_1\;\mathrm{a.e.}\;\mathbf{x}\in\Omega,\;\forall\,1\leq i\leq m$, where $\lambda_1>0$ denotes the first Dirichlet eigenvalue of $-\Delta$ in $\Omega$.
\end{assumption}

The coupled system \eqref{eq:model} serves as a fundamental model in broad scientific fields such as quantum mechanics, nonlinear optics, and materials science. In the scalar case of $m=1$, the coupled system~\eqref{eq:model} reduces to a single-component one, which has been extensively studied (see~\cite{CLXY2025SISC,Rabinowitz1986} and references therein). When multiple components are present and interact through nonlinear coupling, the coupled system~\eqref{eq:model} exhibits a variety of phenomena that do not occur in the single-equation setting, making it particularly relevant in applications such as nonlinear optics~\cite{manakov1974theory,shabat1972exact,Sirakov2007} and multi-component Bose-Einstein condensates (BECs)~\cite{Bao2004,2013BaoBEC,Bao-Du2004}. Actually, the coupled system~\eqref{eq:model} arises naturally in the literature for the special stationary state solutions $\psi_i(\mathbf{x},t) := u_i(\mathbf{x})\exp({\mathrm{i}\omega_i t})$ of the following coupled nonlinear Schr\"odinger (NLS) system under truncation on the bounded domain $\Omega$ with Dirichlet boundary conditions:
 \begin{align}\label{eq:NLSE}
     \mathrm{i} \partial_{t} \psi_i(\mathbf{x},t) = -\Delta \psi_i(\mathbf{x},t) + V_i(\mathbf{x})\psi_i(\mathbf{x},t) -\sum_{j=1}^m g_{ij}|\psi_j(\mathbf{x},t)|^2\psi_i(\mathbf{x},t), \quad t>0,\quad \mathbf{x}\in \mathbb{R}^N.
 \end{align}
Here $\mathrm{i}=\sqrt{-1}$ is the imaginary unit, $\omega_i\in\mathbb{R}$ are the prescribed frequencies (or chemical potentials) and $V_i(\mathbf{x}):=a_i(\mathbf{x})-\omega_i$ represent the external potentials. Particularly, when the external potential is absent ($V_i =0$ for all $i = 1, 2, \ldots, m$), the coupled NLS system~\eqref{eq:NLSE} is widely applied in nonlinear optics, modeling the propagation of $m$ self-trapped, mutually incoherent wave packets in photo-refractive media with a drift-type nonlinear response~\cite{PRL_Akhmediev1999}. In this context, the solution $\boldsymbol{\psi} =(\psi_1,\psi_2,\ldots,\psi_m)$ represents the components of the optical beam in Kerr-like photo-refractive media~\cite{manakov1974theory,shabat1972exact,Sirakov2007}. When $V_i$, $i = 1, 2, \ldots, m$,  are present and chosen as box potentials, harmonic potentials or lattice potentials, the coupled NLS system~\eqref{eq:NLSE} also models multi-component BECs trapped in external potentials~\cite{Bao2004,2013BaoBEC,Bao-Du2004}. Physically, the solution $\boldsymbol{\psi}$ denotes the macroscopic wave function. The positive constants $g_{jj} > 0$ represent the strengths of the self-focusing interaction in the $j$-th component of the beam or the condensate, while the coupling constants $g_{ij}$ with $i\neq j$ model the interaction between the $i$-th and $j$-th components. If $g_{ij}>0$, the interaction is said to be attractive; if $g_{ij}<0$, it is repulsive. Due to the wide application background, the coupled system~\eqref{eq:model} has attracted considerable attentions, with theoretical works~\cite{DANCER2010953,Sato2015,SOAVE2015} and the numerical works~\cite{CHEN2010,CHEN2008,YuBo2020}.

The variational functional corresponding to the coupled system~\eqref{eq:model} is
\begin{align}\label{eq:energy}
E(\mathbf{u}) = \frac{1}{2}\int_{\Omega}\sum_{i=1}^m\left(\varepsilon_i|\nabla u_i(\mathbf{x})|^2 + a_i(\mathbf{x})|u_i(\mathbf{x})|^2 \right)d\mathbf{x} - \frac{1}{4} \int_{\Omega}\sum_{i,j=1}^{m}   g_{ij}|u_i(\mathbf{x})|^2 |u_j(\mathbf{x})|^2 d \mathbf{x}. 
\end{align}
Clearly, $E\in C^2(H,\mathbb{R})$. Weak solutions of the coupled system~\eqref{eq:model} correspond to critical points $\mathbf{u^*}$ of $E$ in $H$, i.e. $E'(\mathbf{u}^*) = 0$, where $E'(\mathbf{u}^*)$ denotes the Fr\'echet derivative of $E$ at $\mathbf{u}^*$. It is well known that the coupled system~\eqref{eq:model} may admit multiple solutions~\cite{DANCER2010953,Rabinowitz1986,Sato2015}. Among all nonzero solutions in $H$, we focus on a type of special nonzero solutions $\mathbf{u}_g$ that minimize the variational functional $E$. Such nonzero solutions $\mathbf{u}_g$ are often referred as the ground state solutions~\cite{Berestycki1983I,Djairo1998}. Since the term ``ground state solution'' may have different interpretations in different literature, following \cite{Berestycki1983I,CORREIA2016,CORREIA2016JFA,Djairo1998,LYZ2023SISC}, we introduce the following definition for clarity. 
\begin{definition}\label{def:ug}
$\mathbf{u}_g\in H\backslash\{0\}$ is called a {\em ground state solution} of the coupled system~\eqref{eq:model}, if 
\[ E'(\mathbf{u}_g)=0\quad\mbox{and}\quad E(\mathbf{u}_g)=\min\left\{E(\mathbf{u})\;\big|\; \mathbf{u}\in H\backslash\{0\},\, E'(\mathbf{u})=0\right\}. \]
\end{definition}
Since $E$ is unbounded from below in $H$ and $u \equiv 0$ is a local minimizer of $E$ in $H$, the direct minimization of $E$ in $H$ is hard to obtain the ground state solution.

From the variational viewpoint~\cite{1994Infinite}, a ground state solution $\mathbf{u}_g$ of the coupled system~\eqref{eq:model} is an unstable critical point of $E$ with the Morse index (MI) equal to one (1-saddles), i.e. the maximal dimension of negative definite subspace of the Hessian operator $E''(\mathbf{u}_g)$ in $H$ is one. 
Equivalently, at $\mathbf{u}_g=(u_1^g,u_2^g,\ldots,u_m^g)$, the eigenvalue problem of the linearized operator of the system \eqref{eq:model},
\begin{align}\label{eq:spec_steady-state1}
 	 	\left\{\begin{aligned}
 		   & \lambda w_i(\mathbf{x}) = \varepsilon_i \Delta w_i(\mathbf{x}) -a_i(\mathbf{x}) {w}_i(\mathbf{x}) + \sum_{j=1}^m g_{ij}\left( |u_j^g(\mathbf{x})|^2 w_i(\mathbf{x}) +  2u_i^g(\mathbf{x})u_j^g(\mathbf{x}) {w}_j(\mathbf{x})\right), \quad \mathbf{x} \in \Omega,   \\ 
 		 &   w_i(\mathbf{x})  =0,\quad \mathbf{x} \in {\partial \Omega}, \,\quad  i=1,2,\ldots,m. 
 		 \end{aligned} \right.
\end{align}
has exactly one positive eigenvalue $\lambda >0$.
By the linear stability theory~\cite{ChenHu2008,Kielhofer1974}, this implies that $\mathbf{u}_g$ is also a linearly unstable steady state of the following $L^2$-gradient flow, 
\begin{align}\label{eq:model-RD}
\left\{\begin{aligned}
&\frac{\partial \phi_i(\mathbf{x},t)}{\partial t}=\varepsilon_i\Delta \phi_i(\mathbf{x},t) - a_i(\mathbf{x}) \phi_i(\mathbf{x},t) +\sum_{j=1}^m  g_{ij}|\phi_j(\mathbf{x},t)|^2 \phi_i(\mathbf{x},t),\quad \mathbf{x} \in \Omega,\quad t>0,  \\
&\phi_i(\mathbf{x},t)  = 0, \quad \mathbf{x} \in \partial \Omega, \quad i=1,2,\ldots,m,
\end{aligned}\right. 
\end{align}
in the sense that any perturbation along the eigenfunction direction corresponding to the positive eigenvalue $\lambda$ drives the trajectory away from $\mathbf{u}_g$ exponentially.
Therefore, one cannot stably compute $\mathbf{u}_g$ by simply evolving \eqref{eq:model-RD} from an initial guess. 
The combination of the multiplicity and instability of the solutions and the complexity introduced by the coupling between components 
make the efficient numerical computation of ground state solutions of the coupled system~\eqref{eq:model} highly challenging.

In critical point theory, the Nehari manifold  \cite{1960On,1961Characteristic,2010Themethod}  
\begin{align}\label{eq:Nehari manifold-1}
\mathcal{N} 
&= \left\{\mathbf{u} \in H\backslash\{0\}:  \langle E'(\mathbf{u}), \mathbf{u}\rangle = 0\right\}
\end{align}
is introduced to study the ground state critical points of functional $E$, where $\langle \cdot, \cdot \rangle$ denotes the duality pairing between $H$ and its dual space $H^*$. Clearly, $\mathcal{N}$ contains all nontrivial critical points of $E$. Recently, by exploiting the geometry of the Nehari manifold $\mathcal{N}$, a Nehari manifold optimization method (NMOM) is developed to find 1-saddles of a generic functional by minimizing the same functional on the associated Nehari manifold \cite{CLXY2025SISC}. In fact, the Nehari manifold $\mathcal{N}$ serves as a natural constraint so that all nontrivial critical points in $H$ are identical to critical points restricted on the manifold \cite{CLXY2025SISC}. Then the ground state solution $\mathbf{u}_g$ can be characterized by the minimizer of $E$ on $\mathcal{N}$. This inspires us to transform the computation of unstable ground state solutions of coupled system~\eqref{eq:model} into the following minimization problem on the manifold: 
\begin{align}\label{eq:min_E_on_N}
\mbox{Find $\mathbf{u}_g\in\mathcal{N}$ \; s.t.}\quad 
E(\mathbf{u}_g)=\min_{ \mathbf{u} \in \mathcal{N}} E(\mathbf{u}).
\end{align}
The variational formulation \eqref{eq:min_E_on_N} offers a numerically implementable pathway for stably computing the ground state solution.

To address the minimization problem of a generic functional on Nehari manifold, the NMOM \cite{CLXY2025SISC} introduces a Riemannian optimization framework on $\mathcal{N}$. This framework  consists of two main ingredients: one is a retraction mapping, which ensures that all iterates remain on the Nehari manifold; and the other is a tangential search direction (typically a descent direction) combined with a suitable step-size rule to decrease the functional. Under a nonmonotone step-size strategy, global convergence of the Nehari manifold optimization algorithm  has been established in \cite{CLXY2025SISC}. Particularly, the NMOM introduces the Riemannian steepest-descent algorithm on the Nehari manifold (RSD-$\mathcal{N}$)  to successfully compute the ground states of a class of semilinear elliptic equations. However, a steepest-descent-type optimization algorithm often exhibits slow convergence rate, especially when the target solution is ill-conditioned or structurally delicate. For instance, as the coupling matrix $\mathbf{g}$ varies, the ground state solution of the coupled system~\eqref{eq:model} may change from a fully nontrivial state (i.e., all components are nonzero) to a semi-trivial state with some components vanishing. Near such transitions, the solution becomes highly sensitive, and the number of iterations required for convergence increases significantly. 
Moreover, for the coupled system~\eqref{eq:model}, each iteration of the Nehari manifold optimization  algorithm requires solving $2m$ Poisson equations for computing the Riemannian gradient direction, which is more expensive than in the scalar case. As a result, the overall computational cost can become prohibitive, underscoring the need for more efficient algorithms within the NMOM framework.

In the unconstrained setting, the Nesterov accelerated gradient (NAG) method is a widely used acceleration scheme. Its core idea is to perform a linear extrapolation using the two most recent iterates, followed by a gradient-descent update at the extrapolated point. With only a modest additional cost per step, this extrapolation yields significantly faster convergence than standard steepest-descent one~\cite{NAG}. The continuous time limit of the NAG method can be interpreted as a second-order gradient flow, and its acceleration properties have been rigorously justified~\cite{SuWeijie-2016}. NAG-type ideas have also been successfully applied to compute ground states of rotating Bose–Einstein condensates, demonstrating high efficiency~\cite{CHEN_JCP2023}. Inspired by these successes, we aim to incorporate an extrapolation mechanism on the Nehari manifold and combine it with the RSD-$\mathcal{N}$ of the NMOM to obtain an Riemannian accelerated gradient algorithm on the Nehari manifold (RAG-$\mathcal{N}$). In the Riemannian optimization, the NAG method has attracted considerable interests, with extrapolation typically carried out along the geodesic connecting the two latest iterates via the exponential mapping~\cite{Liu2017RAG,Zhang_pmlr2018}. However, the exponential mapping on the (infinite-dimensional) Nehari manifold $\mathcal{N}$ may not be available, and even when it exists, computing it requires solving a geodesic equation, which is computationally demanding. Therefore, directly transplanting the exponential-mapping-based NAG idea to the Nehari manifold is not practical. To generate the RAG-$\mathcal{N}$  is still full of challenges.

In this paper, we construct a curve connecting the previous two iterates in $\mathcal{N}$ by combining the linear extrapolation with a suitable pullback operation onto the Nehari manifold $\mathcal{N}$, which approximates the geodesic between them. The extrapolated point is then chosen along this curve. It leads to a new  nonlinear extrapolation. Compared to exponential-map-based extrapolation, the proposed nonlinear extrapolation has an explicit form and only requires the evaluation of two integrals, making it easy to implement. After obtaining the extrapolated point, we apply the RSD step of the NMOM starting from this point, thus yielding the RAG-$\mathcal{N}$ algorithm. To further enhance robustness, we combine this accelerated scheme with a nonmonotone step-size search strategy.

The remainder of this paper is organized as follows. Section~\ref{sec:NMOM} presents the local variational characterization of the ground state solutions on the Nehari manifold $\mathcal{N}$ for the coupled system~\eqref{eq:model}. Based on this, Section~\ref{sec:Algorithms} proposes the efficient RAG-$\mathcal{N}$ algorithms. In Section~\ref{Sec:numerical result}, we report detailed numerical results that illustrate the feasibility and efficiency of the RAG-$\mathcal{N}$ algorithms for the coupled system~\eqref{eq:model} with two, three and four components, and investigate their behavior under various settings, including Gaussian-type external potentials,  singular diffusion coefficients and different coupling coefficients.

\paragraph{\bf Notations} 
For $ \mathbf{u} = (v_1,v_2,\ldots,v_m)\in H$ and $\mathbf{v} = (u_1,u_2,\ldots,u_m) \in H$, define the inner product $(\cdot,\cdot)_H$ and the induced norm $\|\cdot\|_H$ as
 \begin{align}\label{eq:in_p}
 (\mathbf{u},\mathbf{v})_H = \int_{\Omega} \sum_{i=1}^m \varepsilon_i \nabla  u_i(\mathbf{x}) \cdot \nabla v_i(\mathbf{x}) d\mathbf{x}, \quad \|\mathbf{u}\|_H = \sqrt{(\mathbf{u},\mathbf{u})_H}.
 \end{align}
By applying the Poincar\'{e} inequality,  $H$ equipped with the above inner product $(\cdot,\cdot)_H$ is indeed a Hilbert space. The $H$-gradient of the functional $E$ at $\mathbf{u}\in H$, denoted as $\nabla E(\mathbf{u})\in H$, is defined as the Riesz representer of $E'(\mathbf{u})$, i.e.,
 \begin{align}\label{def:Hgrad}
(\nabla E(\mathbf{u}),\mathbf{v})_H = \langle E'(\mathbf{u}),\mathbf{v} \rangle \quad \forall \;\mathbf{v} \in H, 
 \end{align}
For convenience, we use the notation $\|\cdot\|_{\infty}$ to denote the $L^\infty$-norm for functions and the maximum norm for vectors/matrices.

\section{Preliminaries}\label{sec:NMOM}
In this section, some special properties of the variational functional $E$~\eqref{eq:energy} and the geometry settings of the  associated Nehari manifold $\mathcal{N}$ are analyzed. Then the local minimization variational characterization for the unstable ground state solutions of the coupled system~\eqref{eq:model} is established on the Nehari manifold $\mathcal{N}$. 

\subsection{Variational settings}
For convenience, we rewrite $E(\mathbf{u}) = \frac{1}{2}K(\mathbf{u}) - \frac{1}{4}I(\mathbf{u})$ with functionals $K(\mathbf{u})$ and $I(\mathbf{u})$ defined as
\[K(\mathbf{u}) = \int_{\Omega} \sum_{i=1}^{m}\left(\varepsilon_i|\nabla u_i|^2 + a_i(\mathbf{x})u_i^2\right) d \mathbf{x}\quad \text{ and }\quad I(\mathbf{u}) =  \int_{\Omega}\sum_{i,j=1}^{m} g_{ij}u_i^2u_j^2  d\mathbf{x}.\] 
Utilizing the Poincar\'e inequality, the H\"older inequality and the Sobolev embedding $H_0^1(\Omega)\hookrightarrow L^{4}(\Omega)$, one can conclude that there exist positive constants $c_1,c_2,c_3>0$ s.t.
    \begin{align}\label{eq:sob_1}
    c_1 \|\mathbf{u}\|_H^2 \leq K(\mathbf{u}) \leq c_2 \|\mathbf{u}\|_H^2, \quad 
    I(\mathbf{u}) \leq c_3\|\mathbf{u}\|_H^4 \quad \forall\; \mathbf{u} \in H.
    \end{align}
In addition, $E\in C^2(H,\mathbb{R})$, and for all  $\mathbf{v}, \mathbf{w} \in H$, its first- and second-order Fr\'echet derivatives at $\mathbf{u}$ satisfy 
 \begin{align}
 \label{eq:E'uv}
     \langle E'(\mathbf{u}), \mathbf{v} \rangle &= \int_{\Omega} \sum_{i=1}^m \left(\varepsilon_i \nabla u_i \cdot \nabla v_i + a_i u_iv_i \right) d\mathbf{x} - \int_{\Omega} \sum_{i,j=1}^m g_{ij}u_j^2u_iv_i  d\mathbf{x}, \\
 \label{eq:E''uwv}
     \langle E''(\mathbf{u})\mathbf{w}, \mathbf{v}\rangle &= \int_{\Omega} \sum_{i=1}^m \left(\varepsilon_i\nabla w_i \cdot \nabla v_i + a_i w_i v_i \right) d\mathbf{x} - \int_{\Omega}\sum_{i,j=1}^m g_{ij} \left(u_j^2w_iv_i  + 2u_jw_ju_iv_i  \right) d\mathbf{x}.
 \end{align}
Let us introduce a functional $G:H\to \mathbb{R}$ as
\begin{align}\label{eq:def_G}
G(\mathbf{u}) := \langle E'(\mathbf{u}),\mathbf{u}\rangle= K(\mathbf{u}) - I(\mathbf{u}).
\end{align}
According to the definition \eqref{eq:Nehari manifold-1}, the Nehari manifold $\mathcal{N}$ can be expressed as 
\(\mathcal{N} = \left\{ \mathbf{u} \in H\backslash \{0\}: G(\mathbf{u}) = 0\right\}. \)

Clearly, when $\mathbf{u} \in \mathcal{N}$, one has $K(\mathbf{u}) = I(\mathbf{u}) = K^2(\mathbf{u})/I(\mathbf{u}) \geq c_1^2/c_3>0$ and $E(\mathbf{u}) = \frac{1}{4}K(\mathbf{u})=\frac{1}{4}I(\mathbf{u})$. We have the following basic properties for the variational functional $E$. 
\begin{lemma}\label{lem:basic_properties}
   Under Assumption~\ref{ass:A}, the followings hold:
   \begin{itemize}
        \item[(i)] For each $\mathbf{v} = (v_1,v_2,\ldots,v_m) \in H\backslash \{0\}$, the function $\Phi(s) = E(s\mathbf{v})$ w.r.t. $s\in (0,\infty)$ owns a unique critical point $s_{\mathbf{v}}>0$, and $s_{\mathbf{v}} {\mathbf{v}} \in \mathcal{N}$. Specifically, 
         \begin{align}\label{eq:rho}
            s_{\mathbf{v}} = \left(\frac{K(\mathbf{v})}{I(\mathbf{v})} \right) ^{{1}/{2}}  = \left(\frac{\sum\limits_{i=1}^{m}\int_{\Omega}  \left(\varepsilon_i |\nabla
	        v_i|^2+ a_iv_i^2 \right)d \mathbf{x}}{\sum\limits_{i,j=1}^m \int_{\Omega} g_{ij}v_{i}^2v_j^2 d \mathbf{x}} \right)^{{1}/{2}}.
         \end{align}
        \item[(ii)] For all $\mathbf{u} \in \mathcal{N}$, $\langle E''(\mathbf{u})\mathbf{u},\mathbf{u} \rangle <0$. 
        \item[(iii)] There exists a  positive constant $\sigma_0 >0$ s.t.
    \begin{align} 
        \|\mathbf{u}\|_H \geq \sigma_0, \quad \forall\; \mathbf{u} \in \mathcal{N}. \label{eq:sob_11}
    \end{align}
    \end{itemize}
\end{lemma}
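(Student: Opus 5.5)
Everything reduces to the homogeneity of the two parts of $E$: $K(s\mathbf{v})=s^2K(\mathbf{v})$ and $I(s\mathbf{v})=s^4I(\mathbf{v})$. The first thing to establish is that $K(\mathbf{v})>0$ and $I(\mathbf{v})>0$ for every $\mathbf{v}\in H\backslash\{0\}$.

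For $K$, positivity follows from the lower bound $K(\mathbf{v})\ge c_1\|\mathbf{v}\|_H^2$ in \eqref{eq:sob_1}.

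For $I$, I split according to Assumption~\ref{ass:A}:
\begin{itemize}
\item[$\bullet$] Under the fully cooperative condition, $I(\mathbf{v})=\int_\Omega\sum_{i,j}g_{ij}v_i^2v_j^2\,d\mathbf{x}\ge \min_i g_{ii}\int_\Omega\sum_i v_i^4\,d\mathbf{x}$.
\item[$\bullet$] Under the positively coupled condition, write $\mathbf{w}(\mathbf{x})=(v_1^2,\ldots,v_m^2)^T$. Then $I(\mathbf{v})=\int_\Omega \mathbf{w}^T\mathbf{g}\mathbf{w}\,d\mathbf{x}\ge \lambda_{\min}(\mathbf{g})\int_\Omega|\mathbf{w}|^2\,d\mathbf{x}$.
\end{itemize}
In both cases the right-hand side is positive, since some $v_i\neq0$ on a set of positive measure.

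For (i), compute $\Phi(s)=\frac{s^2}{2}K(\mathbf{v})-\frac{s^4}{4}I(\mathbf{v})$, so $\Phi'(s)=s\big(K(\mathbf{v})-s^2I(\mathbf{v})\big)$. On $(0,\infty)$ this vanishes exactly at $s_{\mathbf{v}}=(K(\mathbf{v})/I(\mathbf{v}))^{1/2}$, which is \eqref{eq:rho} once $K$ and $I$ are written out. Moreover $G(s_{\mathbf{v}}\mathbf{v})=s_{\mathbf{v}}^2K(\mathbf{v})-s_{\mathbf{v}}^4I(\mathbf{v})=0$ and $s_{\mathbf{v}}\mathbf{v}\neq0$, so $s_{\mathbf{v}}\mathbf{v}\in\mathcal{N}$.

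For (ii), setting $\mathbf{w}=\mathbf{v}=\mathbf{u}$ in \eqref{eq:E''uwv} gives $\langle E''(\mathbf{u})\mathbf{u},\mathbf{u}\rangle=K(\mathbf{u})-3I(\mathbf{u})$, because $\sum g_{ij}(u_j^2u_i^2+2u_j^2u_i^2)=3\sum g_{ij}u_i^2u_j^2$. On $\mathcal{N}$ we have $K(\mathbf{u})=I(\mathbf{u})$, so this equals $-2I(\mathbf{u})<0$ by the positivity of $I$.

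For (iii), take $\mathbf{u}\in\mathcal{N}$. Then $c_1\|\mathbf{u}\|_H^2\le K(\mathbf{u})=I(\mathbf{u})\le c_3\|\mathbf{u}\|_H^4$. Dividing by $\|\mathbf{u}\|_H^2>0$ gives $\|\mathbf{u}\|_H\ge\sigma_0:=(c_1/c_3)^{1/2}$.

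The main obstacle is the strict positivity of $I$ for arbitrary nonzero $\mathbf{v}$ under both alternatives of Assumption~\ref{ass:A}, since uniqueness and existence in (i) and the sign in (ii) depend on it. The remaining steps are direct computations.
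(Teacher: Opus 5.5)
Your proof is correct and follows the paper's argument. It uses the homogeneity form $\Phi(s)=\frac{s^2}{2}K(\mathbf{v})-\frac{s^4}{4}I(\mathbf{v})$ for (i), the identity $\langle E''(\mathbf{u})\mathbf{u},\mathbf{u}\rangle=K(\mathbf{u})-3I(\mathbf{u})=-2I(\mathbf{u})$ on $\mathcal{N}$ for (ii), and $c_1\|\mathbf{u}\|_H^2\le K(\mathbf{u})=I(\mathbf{u})\le c_3\|\mathbf{u}\|_H^4$ for (iii), just as the paper does; the only addition is that you explicitly verify $I(\mathbf{v})>0$ under each alternative of Assumption~\ref{ass:A}, which the paper simply asserts.
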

\begin{proof}
For each $\mathbf{v}\in H\backslash \{0\}$, one has $\Phi(s) =E(s\mathbf{v})=\frac{s^2}{2}K(\mathbf{v})-\frac{s^4}{4}I(\mathbf{v})$ with $K(\mathbf{v})>0$ and $I(\mathbf{v})>0$. $(i)$ is straightforward. Then, for all $\mathbf{u} \in \mathcal{N}$, by applying \eqref{eq:E''uwv}, a direct computation gives that $\langle E''(\mathbf{u})\mathbf{u},\mathbf{u} \rangle =- 2\int_{\Omega}\sum_{i,j=1}^{m} g_{ij}u_i^2u_j^2  d\mathbf{x}  < 0$, which leads to $(ii)$. Moreover, applying \eqref{eq:sob_1} and the fact $K(\mathbf{u}) = I(\mathbf{u})$ for $\mathbf{u} \in  \mathcal{N}$, one has $\| \mathbf{u}\|_H \geq \sqrt{{c_1}/{c_3 }}=: \sigma_0$. Thus, $(iii)$ is obtained. The proof is completed.
 \end{proof}

Intuitively, $(i)$ and  $(ii)$ in Lemma~\ref{lem:basic_properties} show that $s_{\mathbf{v}}$ is the global maximizer of $\Phi(s) = E(s\mathbf{v})$ in $(0,\infty)$, and $(iii)$ states that  $\mathcal{N}$ is bounded away from 0 and therefore a closed subset of $H$. Actually, Lemma~\ref{lem:basic_properties} verifies the basic assumptions $(A1)$-$(A3)$ in \cite{CLXY2025SISC} for the variational functional~\eqref{eq:model}.

\subsection{Local minimization characterization on \texorpdfstring{$\mathcal{N}$}{N}} 
In the following, we consider the local minimization characterization of the ground state solution $\mathbf{u}_g$ on the Nehari manifold $\mathcal{N}$. Recalling the definition of $G$ in \eqref{eq:def_G}, $(ii)$ in Lemma~\ref{lem:basic_properties} illustrates that $G(\mathbf{u})=0$ defines a regular (nondegenerate) constraint, i.e., $G'(\mathbf{u})\neq0$ for all $\mathbf{u}\in\mathcal{N}$, which is a direct conclusion from the fact $\langle G'(\mathbf{u}),\mathbf{u}\rangle=\langle E''(\mathbf{u})\mathbf{u},\mathbf{u}\rangle<0$ on $\mathcal{N}$. This essentially ensures the smoothness of the Nehari manifold $\mathcal{N}$. In fact, based on Lemma~\ref{lem:basic_properties}, one can apply \cite[Lemma~2.1]{CLXY2025SISC} to confirm that $\mathcal{N}$ is a closed $C^1$ submanifold\footnote{Recall that a closed subset $ \mathcal{M} \subset H $ is closed $C^m$ submanifold ($m\geq1$) of $H$ if the set of charts in $ \mathcal{M}$ which are restrictions of charts for $H$ form an atlas for $\mathcal{M}$ of class $C^m$ \cite{1995differential,Palais1963}.} of $H$. 

Then, the \textbf{tangent space} of $\mathcal{N}$ at $\mathbf{u} = (u_1,u_2,\ldots,u_m) \in \mathcal{N}$ is given by  
  \begin{align}
  T_{\mathbf{u}}  \mathcal{N} & = \{\boldsymbol{\xi}= (\xi_1,\xi_2,\ldots,\xi_m) \in H: \langle G'(\mathbf{u}), \boldsymbol{\xi} \rangle = 0\} \notag \\  
  & = \bigg\{\boldsymbol{\xi} \in H: \int_{\Omega }\sum_{i=1}^m \left(\varepsilon_i\nabla u_i \cdot \nabla \xi_i + a_iu_i \xi_i \right) d\mathbf{x} = 2\int_{\Omega } \sum_{i,j=1}^m  g_{ij}u_iu_j^2 \xi_i d\mathbf{x} \bigg\}.
  \end{align}
Clearly, $\mathcal{N}$ owns a Riemannian geometry by naturally inheriting the Riemannian metric $(\cdot,\cdot)_H$ of the Hilbert space $H$. The {\bf Riemannian gradient} of $E$ at $\mathbf{u}\in\mathcal{N}$, denoted by $\nabla_{\mathcal{N}} E(\mathbf{u})$, is defined as the unique element in the tangent space $T_{\mathbf{u}} \mathcal{N}$, s.t. 
 \begin{align}\label{def:R-grad}
 (\nabla_{\mathcal{N}} E(\mathbf{u}),\bm{\xi})_H=\langle E'(\mathbf{u}),\bm{\xi}, \rangle\quad \forall\; \bm{\xi}\in T_{\mathbf{u}} \mathcal{N}. 
\end{align}

By comparing \eqref{def:R-grad} with the definition of the $H$-gradient $\nabla E(\mathbf{u})$ in \eqref{def:Hgrad}, one can conclude that $\nabla_{\mathcal{N}} E(\mathbf{u})$ is the $H$-orthogonal projection of $\nabla E(\mathbf{u})$ onto $T_{\mathbf{u}} \mathcal{N}$, i.e., 
\begin{align}\label{eq:Rgrad}
\nabla_{\mathcal{N}}E(\mathbf{u}) = \nabla E(\mathbf{u}) - \frac{(\nabla E(\mathbf{u}),\nabla G(\mathbf{u}))_H}{\|\nabla G(\mathbf{u})\|_H^2} \nabla G(\mathbf{u}).
\end{align}
Then we can establish the following estimates for the norm of the Riemannian gradient and $H$-gradient of the functional $E$ in \eqref{eq:energy} with the proof illustrated in Appendix~\ref{sec:proof-lem:Hgrad-vs-Rgrad-norm}.
\begin{lemma}\label{lem:Hgrad-vs-Rgrad-norm}
Under Assumption~\ref{ass:A}, there exists a constant $C>0$, s.t.
    \begin{align}\label{eq:Hgrad-vs-Rgrad-norm}
     \left\|\nabla_{\mathcal{N}}E(\mathbf{u}) \right\|_H 
     \leq \left\|\nabla E(\mathbf{u}) \right\|_H 
     \leq C\left\|\nabla_{\mathcal{N}}E(\mathbf{u}) \right\|_H\left(1+E(\mathbf{u})\right)\quad \forall\; \mathbf{u}\in\mathcal{N}.
    \end{align}
\end{lemma}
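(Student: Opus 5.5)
The first inequality in \eqref{eq:Hgrad-vs-Rgrad-norm} is immediate from \eqref{eq:Rgrad}. There $\nabla_{\mathcal{N}}E(\mathbf{u})$ is the $H$-orthogonal projection of $\nabla E(\mathbf{u})$ onto the closed subspace $T_{\mathbf{u}}\mathcal{N}$, and an orthogonal projection is nonexpansive. For the second inequality, decompose $\nabla E(\mathbf{u}) = \nabla_{\mathcal{N}}E(\mathbf{u}) + t\,\nabla G(\mathbf{u})$ with $t = (\nabla E(\mathbf{u}),\nabla G(\mathbf{u}))_H/\|\nabla G(\mathbf{u})\|_H^2$. Then $\|\nabla E(\mathbf{u})\|_H \le \|\nabla_{\mathcal{N}}E(\mathbf{u})\|_H + |t|\,\|\nabla G(\mathbf{u})\|_H$, so the task reduces to bounding $|t|\,\|\nabla G(\mathbf{u})\|_H$ by $C\|\nabla_{\mathcal{N}}E(\mathbf{u})\|_H(1+E(\mathbf{u}))$.

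The key device is to test the decomposition against $\mathbf{u}$ itself. On $\mathcal{N}$ we have $(\nabla E(\mathbf{u}),\mathbf{u})_H = G(\mathbf{u}) = 0$. Moreover $(\nabla G(\mathbf{u}),\mathbf{u})_H = \langle G'(\mathbf{u}),\mathbf{u}\rangle = 2K(\mathbf{u}) - 4I(\mathbf{u}) = -2I(\mathbf{u}) = -8E(\mathbf{u})$, which is the identity behind Lemma~\ref{lem:basic_properties}(ii). Hence
\[
0 = (\nabla_{\mathcal{N}}E(\mathbf{u}),\mathbf{u})_H - 8t\,E(\mathbf{u}),
\qquad\text{so}\qquad
|t| \le \frac{\|\nabla_{\mathcal{N}}E(\mathbf{u})\|_H\,\|\mathbf{u}\|_H}{8E(\mathbf{u})}.
\]
It follows that $|t|\,\|\nabla G(\mathbf{u})\|_H \le \|\nabla_{\mathcal{N}}E(\mathbf{u})\|_H \cdot \|\mathbf{u}\|_H\|\nabla G(\mathbf{u})\|_H/(8E(\mathbf{u}))$, and it remains to show that this last ratio is at most $C(1+E(\mathbf{u}))$.

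For this I would use \eqref{eq:sob_1}. On $\mathcal{N}$, $4E(\mathbf{u}) = K(\mathbf{u}) \ge c_1\|\mathbf{u}\|_H^2$, so $\|\mathbf{u}\|_H \le 2\sqrt{E(\mathbf{u})/c_1}$. Next I bound $\|\nabla G(\mathbf{u})\|_H = \sup_{\|\mathbf{v}\|_H=1}\langle G'(\mathbf{u}),\mathbf{v}\rangle$. The quadratic part contributes $2\int\sum(\varepsilon_i\nabla u_i\cdot\nabla v_i + a_iu_iv_i)$, which is $\le C\|\mathbf{u}\|_H$ by the Poincar\'e inequality and $\mathbf{a}\in L^\infty$. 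The quartic part contributes $4\int\sum g_{ij}u_j^2u_iv_i$, which by H\"older and $H_0^1\hookrightarrow L^4$ is $\le C\|\mathbf{g}\|_\infty\|\mathbf{u}\|_H^3$. Thus $\|\nabla G(\mathbf{u})\|_H \le C(\|\mathbf{u}\|_H + \|\mathbf{u}\|_H^3)$, and so
\[
\frac{\|\mathbf{u}\|_H\|\nabla G(\mathbf{u})\|_H}{E(\mathbf{u})} \le C\,\frac{\|\mathbf{u}\|_H^2 + \|\mathbf{u}\|_H^4}{E(\mathbf{u})} \le C\bigl(1 + E(\mathbf{u})\bigr),
\]
using $\|\mathbf{u}\|_H^2 \le 4E(\mathbf{u})/c_1$. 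Note that $E(\mathbf{u}) \ge c_1\sigma_0^2/4 > 0$ on $\mathcal{N}$ by Lemma~\ref{lem:basic_properties}(iii), so dividing by $E(\mathbf{u})$ is harmless.

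The main point needing care is to pair against $\mathbf{u}$ rather than $\nabla G(\mathbf{u})$. Bounding $t$ via $\|\nabla G(\mathbf{u})\|_H^{-2}$ directly would require a lower bound on $\|\nabla G(\mathbf{u})\|_H$, which must itself come from $|(\nabla G(\mathbf{u}),\mathbf{u})_H| = 8E(\mathbf{u})$. The pairing with $\mathbf{u}$ sidesteps that and gives exactly the growth factor $1+E(\mathbf{u})$. The remaining estimates are routine Sobolev bounds, uniform in $\mathbf{u}$ under Assumption~\ref{ass:A}.
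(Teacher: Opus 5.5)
Your proof is correct and follows essentially the same route as the paper's. You pair the projection formula \eqref{eq:Rgrad} with $\mathbf{u}$, use $(\nabla E(\mathbf{u}),\mathbf{u})_H=0$ and $(\nabla G(\mathbf{u}),\mathbf{u})_H=-2K(\mathbf{u})=-8E(\mathbf{u})$ to write the normal coefficient as $(\nabla_{\mathcal{N}}E(\mathbf{u}),\mathbf{u})_H/(2K(\mathbf{u}))$, bound $\|\nabla G(\mathbf{u})\|_H\le \tilde{C}(\|\mathbf{u}\|_H+\|\mathbf{u}\|_H^3)$ by H\"older and Sobolev, and absorb powers of $\|\mathbf{u}\|_H$ via $4E(\mathbf{u})=K(\mathbf{u})\ge c_1\|\mathbf{u}\|_H^2$; the differences (a dual-norm bound instead of testing against $\nabla G(\mathbf{u})$, and an unneeded appeal to $E(\mathbf{u})\ge c_1\sigma_0^2/4$) are cosmetic.
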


Note that all nontrivial critical points of $E$ lie in $\mathcal{N}$. Moreover, Lemma~\ref{lem:Hgrad-vs-Rgrad-norm} implies that the equivalence between nontrivial critical points of $E$ in $H$ (where $\nabla E(\mathbf{u}) =0$) and the critical points of $E$ in $\mathcal{N}$ (where $\nabla_{\mathcal{N}} E(\mathbf{u}) =0$). In other words, the Nehari manifold $\mathcal{N}$ is exactly a {\em natural constraint} for nontrivial critical points of $E$. 

Further, we can establish the variational characterization of the 1-saddles of $E$ in $H$ with $\mathrm{MI} = 1$ via the local minimization on $\mathcal{N}$. By virtue of Lemmas~\ref{lem:basic_properties}-\ref{lem:Hgrad-vs-Rgrad-norm}, the following proposition is a direct conclusion of \cite[Theorem~2.4]{CLXY2025SISC}.
\begin{proposition}\label{thm:saddle-minN}
For the functional $E$ defined in~\eqref{eq:energy}, under Assumption~\ref{ass:A}, all local minimizers of $E$ in $\mathcal{N}$ are 1-saddles of $E$ in $H$, and all nontrivial and nondegenerate 1-saddles of $E$ in $H$ are strict local minimizers of $E$ in $\mathcal{N}$.
\end{proposition}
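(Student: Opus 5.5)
The plan is to invoke \cite[Theorem~2.4]{CLXY2025SISC}, as the text announces, after checking its hypotheses $(A1)$--$(A3)$ and the natural-constraint property. These are exactly Lemma~\ref{lem:basic_properties} and Lemma~\ref{lem:Hgrad-vs-Rgrad-norm}. To keep the argument self-contained, I would also sketch the two directions directly, using the decomposition $H = T_{\mathbf{u}}\mathcal{N}\oplus\mathrm{span}\{\mathbf{u}\}$ at $\mathbf{u}\in\mathcal{N}$. This splitting is valid because $\langle G'(\mathbf{u}),\mathbf{u}\rangle<0$ by Lemma~\ref{lem:basic_properties}(ii).

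\emph{First direction.} Let $\mathbf{u}^*$ be a local minimizer of $E$ on $\mathcal{N}$.
\begin{itemize}
\item[(a)] Since $\mathcal{N}$ is a $C^1$ submanifold, $\nabla_{\mathcal{N}}E(\mathbf{u}^*)=0$. By Lemma~\ref{lem:Hgrad-vs-Rgrad-norm}, $\nabla E(\mathbf{u}^*)=0$, so $\mathbf{u}^*$ is a nontrivial critical point.
\item[(b)] The Hessian restricted to $\mathrm{span}\{\mathbf{u}^*\}$ is negative definite, since $\langle E''(\mathbf{u}^*)\mathbf{u}^*,\mathbf{u}^*\rangle<0$ by Lemma~\ref{lem:basic_properties}(ii). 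Hence $\mathrm{MI}\ge1$.
\item[(c)] For the upper bound, use the fact that the map $\mathbf{v}\mapsto s_{\mathbf{v}}\mathbf{v}$ from \eqref{eq:rho} is a $C^1$ retraction-like map near $\mathbf{u}^*$. Then $E(s_{\mathbf{v}}\mathbf{v})\ge E(\mathbf{u}^*)$ for $\mathbf{v}$ near $\mathbf{u}^*$.
\item[(d)] Suppose a two-dimensional subspace $V$ on which $E''(\mathbf{u}^*)$ is negative definite exists. Then $V$ meets $T_{\mathbf{u}^*}\mathcal{N}$ nontrivially in some $\bm{\xi}$. Now $\mathbf{u}^*$ is a critical point on the $C^2$ curve $\gamma(t)=s_{\mathbf{u}^*+t\bm\xi}(\mathbf{u}^*+t\bm\xi)\subset\mathcal{N}$ with $\gamma'(0)\in T_{\mathbf{u}^*}\mathcal{N}$. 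Because $\nabla E(\mathbf{u}^*)=0$, the second derivative satisfies $\frac{d^2}{dt^2}E(\gamma(t))|_{t=0}=\langle E''(\mathbf{u}^*)\gamma'(0),\gamma'(0)\rangle$.
\item[(e)] One checks $\gamma'(0)=\bm\xi+c\mathbf{u}^*$ with $c=0$, since $s$ is stationary along tangent directions. So this second derivative is $\langle E''(\mathbf{u}^*)\bm\xi,\bm\xi\rangle<0$, which contradicts local minimality. Therefore $E''(\mathbf{u}^*)\ge0$ on $T_{\mathbf{u}^*}\mathcal{N}$, and $\mathrm{MI}=1$.
\end{itemize}

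\emph{Second direction.} Let $\mathbf{u}^*$ be a nontrivial, nondegenerate 1-saddle.
\begin{itemize}
\item[(a)] $\mathbf{u}^*\in\mathcal{N}$, and $E''(\mathbf{u}^*)$ is an isomorphism with exactly one negative direction.
\item[(b)] The key step is to show that $E''(\mathbf{u}^*)$ is positive definite on $T_{\mathbf{u}^*}\mathcal{N}$. Because $\mathbf{u}^*$ is a critical point, $\langle E''(\mathbf{u}^*)\mathbf{u}^*,\bm\xi\rangle=\langle G'(\mathbf{u}^*),\bm\xi\rangle-\langle E'(\mathbf{u}^*),\bm\xi\rangle=0$ for $\bm\xi\in T_{\mathbf{u}^*}\mathcal{N}$. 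So $\mathrm{span}\{\mathbf{u}^*\}$ and $T_{\mathbf{u}^*}\mathcal{N}$ are $E''$-orthogonal.
\item[(c)] If some $\bm\xi\in T_{\mathbf{u}^*}\mathcal{N}$ had $\langle E''\bm\xi,\bm\xi\rangle\le0$, the index-plus-nullity would be at least $2$ on $\mathrm{span}\{\mathbf{u}^*,\bm\xi\}$. This contradicts $\mathrm{MI}=1$ together with nondegeneracy.
\item[(d)] Hence $E''\ge0$ on $T_{\mathbf{u}^*}\mathcal{N}$ with trivial kernel there. Since $E''(\mathbf{u}^*)$ is an isomorphism and is a compact perturbation of the coercive part $K$, I obtain coercivity $\langle E''\bm\xi,\bm\xi\rangle\ge\delta\|\bm\xi\|_H^2$ on the tangent space.
\item[(e)] A second-order Taylor expansion of $E\circ\gamma$ along $\mathcal{N}$, using $\nabla E(\mathbf{u}^*)=0$ and the $C^2$ regularity of $E$, then gives $E(\mathbf{u})-E(\mathbf{u}^*)\ge\frac{\delta}{4}\|\mathbf{u}-\mathbf{u}^*\|_H^2$ near $\mathbf{u}^*$. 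So $\mathbf{u}^*$ is a strict local minimizer on $\mathcal{N}$.
\end{itemize}

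The main obstacle I expect is the infinite-dimensional coercivity in step (d) of the second direction, that is, upgrading positivity to uniform positivity. I would handle it through the Fredholm structure: the nonlinear part of $E''$ is compact by the compact embedding $H_0^1\hookrightarrow L^4$. This matches what \cite[Theorem~2.4]{CLXY2025SISC} already provides, so in the written proof I would simply cite it once $(A1)$--$(A3)$ and Lemma~\ref{lem:Hgrad-vs-Rgrad-norm} are verified.
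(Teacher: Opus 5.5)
Your proposal is correct and matches the paper's proof, which simply invokes \cite[Theorem~2.4]{CLXY2025SISC} after noting that Lemma~\ref{lem:basic_properties} supplies the hypotheses $(A1)$--$(A3)$ and Lemma~\ref{lem:Hgrad-vs-Rgrad-norm} supplies the natural-constraint property. Your additional self-contained sketch is sound and goes beyond what the paper writes out; it rests on the splitting $H=T_{\mathbf{u}^*}\mathcal{N}\oplus\mathrm{span}\{\mathbf{u}^*\}$, on $E''$-orthogonality of $\mathbf{u}^*$ and the tangent space at a critical point, and on upgrading positivity to coercivity on $T_{\mathbf{u}^*}\mathcal{N}$.
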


Recall that the ground state solutions of the coupled system~\eqref{eq:model} are those  with the least variational functional value among all nontrivial solution. Lemma~\ref{lem:Hgrad-vs-Rgrad-norm} and Proposition~\ref{thm:saddle-minN} show that these ground state solutions are 1-saddle of $E$ in $H$, and can be obtained by solving the minimization problem on $\mathcal{N}$, as formulated in \eqref{eq:min_E_on_N}. Directly minimizing $E$ in $H$ to compute the ground state solution $\mathbf{u}_g$ (1-saddles of $E$ in $H$) is impractical, while the constrained minimization problem \eqref{eq:min_E_on_N} provides a numerically stable computational model. In addition, we remark that the existence of the ground state solution $\mathbf{u}_g$ of the coupled system~\eqref{eq:model}, which is essentially known in literature \cite{ANTONIO_NLSE_jlms2007}, can be obtained by applying \cite[Theorem~2.7]{CLXY2025SISC} with the following Palais-Smale (PS) compactness on the Nehari manifold $\mathcal{N}$. 
\begin{lemma}\label{lem:PS}
Under Assumption~\ref{ass:A}, the functional $E$ defined in~\eqref{eq:energy} satisfies the {\rm PS} condition on the Nehari manifold $\mathcal{N}$, i.e., every sequence $\{\mathbf{u}_n\}\subset\mathcal{N}$ s.t. $\{E(\mathbf{u}_n)\}$ is bounded and $\|\nabla_{\mathcal{N}} E(\mathbf{u}_n)\|_H \to 0$ (as $n\to \infty$) has a convergent subsequence in $H$.
\end{lemma}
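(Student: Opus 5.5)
The plan is the standard one: first reduce the Riemannian PS condition to the usual PS condition in $H$ via Lemma~\ref{lem:Hgrad-vs-Rgrad-norm}, then use the compact embedding $H_0^1(\Omega)\hookrightarrow L^4(\Omega)$ (valid because $\Omega$ is bounded) to upgrade weak convergence to strong convergence.

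\textbf{Step 1 (boundedness and reduction).} Let $\{\mathbf{u}_n\}\subset\mathcal{N}$ with $E(\mathbf{u}_n)\le M$ and $\|\nabla_{\mathcal{N}}E(\mathbf{u}_n)\|_H\to0$. On $\mathcal{N}$ we have $E(\mathbf{u}_n)=\frac14K(\mathbf{u}_n)$, so \eqref{eq:sob_1} gives
\[
\|\mathbf{u}_n\|_H^2\le 4M/c_1 ,
\]
and the sequence is bounded in $H$. By Lemma~\ref{lem:Hgrad-vs-Rgrad-norm},
\[
\|\nabla E(\mathbf{u}_n)\|_H\le C(1+M)\|\nabla_{\mathcal{N}}E(\mathbf{u}_n)\|_H\to0 .
\]
Hence $\{\mathbf{u}_n\}$ is a bounded PS sequence for $E$ in $H$.

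\textbf{Step 2 (weak limit and compactness of the nonlinear part).} Since $H$ is reflexive, we can extract a subsequence with $\mathbf{u}_n\rightharpoonup\mathbf{u}$ weakly in $H$. By Rellich compactness (the embedding into $L^4$ is subcritical for $N\le3$, which is the implicit setting of \eqref{eq:sob_1}), we also have $\mathbf{u}_n\to\mathbf{u}$ strongly in $L^4(\Omega)^m$ and in $L^2(\Omega)^m$.

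\textbf{Step 3 (strong convergence).} Compute
\[
\langle E'(\mathbf{u}_n)-E'(\mathbf{u}),\mathbf{u}_n-\mathbf{u}\rangle
=\|\mathbf{u}_n-\mathbf{u}\|_H^2+\int_\Omega\sum_i a_i(u_{n,i}-u_i)^2\,d\mathbf{x}
-\int_\Omega\sum_{i,j}g_{ij}\big(u_{n,j}^2u_{n,i}-u_j^2u_i\big)(u_{n,i}-u_i)\,d\mathbf{x}.
\]
The terms on the right behave as follows.
\begin{itemize}
\item[(a)] The left side tends to $0$. Indeed, $\|\nabla E(\mathbf{u}_n)\|_H\to0$ and $\mathbf{u}_n-\mathbf{u}$ is bounded, while $\langle E'(\mathbf{u}),\mathbf{u}_n-\mathbf{u}\rangle\to0$ by weak convergence.
\item[(b)] The $a_i$ term tends to $0$, since $a_i\in L^\infty$ and the $L^2$ convergence is strong.
\item[(c)] The quartic term tends to $0$ by H\"older's inequality, using $\|u_{n,j}^2u_{n,i}\|_{L^{4/3}}\le\|u_{n,j}\|_{L^4}^2\|u_{n,i}\|_{L^4}$ (bounded) together with $\|u_{n,i}-u_i\|_{L^4}\to0$.
\end{itemize}
It follows that $\|\mathbf{u}_n-\mathbf{u}\|_H\to0$.

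Finally, $\mathcal{N}$ is closed by Lemma~\ref{lem:basic_properties}(iii), since $G$ is continuous and $\|\mathbf{u}\|_H\ge\sigma_0$. Therefore the limit satisfies $\mathbf{u}\in\mathcal{N}$, although the statement only requires convergence in $H$.

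\textbf{Main obstacle.} The only delicate point is Step 1: passing from the Riemannian gradient to the full $H$-gradient. This is handled entirely by the factor $(1+E(\mathbf{u}))$ in Lemma~\ref{lem:Hgrad-vs-Rgrad-norm} together with the bound on $E(\mathbf{u}_n)$. The rest is routine, given compactness of the $L^4$ embedding.
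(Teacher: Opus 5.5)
Your proof is correct and follows the paper's route. Like you, the paper uses Lemma~\ref{lem:Hgrad-vs-Rgrad-norm}, with the bound on $E(\mathbf{u}_n)$ making the constant uniform, to turn $\{\mathbf{u}_n\}$ into a PS sequence in $H$, but it then simply cites the PS condition for $E$ in $H$ from Rabinowitz, whereas you prove it directly: boundedness comes for free from $E=\tfrac14 K$ on $\mathcal{N}$, and strong convergence from the compact embedding $H_0^1(\Omega)\hookrightarrow L^4(\Omega)$. Your explicit restriction to $N\le 3$ is a tacit hypothesis of the lemma that the citation hides: \eqref{eq:sob_1} itself holds for $N\le 4$, but for $N=4$ the cubic term is critical and the $L^4$ embedding is no longer compact.
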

\begin{proof}
Let $\{\mathbf{u}_n\}$ be a sequence in $\mathcal{N}$ s.t. $\{E(\mathbf{u}_n)\}$ is bounded and $\|\nabla_{\mathcal{N}} E(\mathbf{u}_n)\|_H \to 0$ as $n\to \infty$. 
As a straightforward conclusion of Lemma~\ref{lem:Hgrad-vs-Rgrad-norm}, one has $\|\nabla  E(\mathbf{u}_n)\|_H \to 0$ as $n\to \infty$. Then, $\{\mathbf{u}_n\}$ is a PS sequence in $H$. By the standard result in critical point theory \cite{Rabinowitz1986}, the functional $E$ defined in~\eqref{eq:energy} satisfies the PS condition in $H$, i.e., every PS sequence in $H$ has a convergent subsequence in $H$. The proof is completed.
\end{proof}

Now the mathematical justifications of the NMOM proposed in \cite{CLXY2025SISC} are verified for the coupled system~\eqref{eq:model} to address the minimization problem~\eqref{eq:min_E_on_N}.

\section{Efficient Nehari manifold optimization algorithms }\label{sec:Algorithms}
In this section, we firstly introduce the RSD-$\mathcal{N}$ algorithm  of NMOM for solving the local minimization problem~\eqref{eq:min_E_on_N} on $\mathcal{N}$.  To improve the efficiency, we propose the RAG-$\mathcal{N}$ algorithm, and further combine the RAG-$\mathcal{N}$ algorithm  with nonmonotone search rule to generate a more stable one. 

\subsection{Nehari retraction and RSD-\texorpdfstring{$\mathcal{N}$}{N} algorithm}
In contrast to unconstrained optimization, where iterates are updated linearly, the Riemannian optimization requires mechanisms to ensure that each iterate remains on the manifold. 
While adopting the exponential mapping (which moves along exact geodesics) is theoretically natural, it is often computationally impractical; the so-called retractions offer a computationally efficient alternation in practice \cite{Absil2008,Boumal2023}. 
Following \cite{CLXY2025SISC}, a  retraction $\mathcal{R}$ is introduced on the Nehari manifold $\mathcal{N}$, which is a $C^1$ mapping from the tangent bundle $T\mathcal{N} = \bigcup_{\mathbf{u} \in \mathcal{N}} \{(\mathbf{u},\boldsymbol{\xi}):\boldsymbol{\xi} \in T_{\mathbf{u}} \mathcal{N}\}$ to $\mathcal{N}$ such that its restriction on $T_{\mathbf{u}} \mathcal{N}$, denoted as $\mathcal{R}_{\mathbf{u}}(\cdot) = \mathcal{R}(\mathbf{u},\cdot)$, satisfies
\begin{enumerate}
    \item[(i)] $\mathcal{R}_{\mathbf{u}}(0) = \mathbf{u}$; 
    \item[(ii)] $\left.\frac{d}{dt}\mathcal{R}_{\mathbf{u}}(t\boldsymbol{\xi}) \right|_{t=0} = \boldsymbol{\xi}$ for all $(\mathbf{u},\boldsymbol{\xi}) \in T\mathcal{N}$. 
\end{enumerate}

The NMOM uses the following update scheme \cite{CLXY2025SISC}
\begin{align}\label{eq:upd_form}
\mathbf{u}_{n+1} = \mathcal{R}_{\mathbf{u}_n}(\alpha_n\boldsymbol{\eta}_n),\quad n=0,1,\ldots,
\end{align}
where $\boldsymbol{\eta}_n \in T_{\mathbf{u}_n}  \mathcal{N}$ is the tangential search direction and $\alpha_n>0$ is the step-size. We remark that there are various choices for the search direction $\boldsymbol{\eta}_n$, the step-size $\alpha_n$ and the retraction $\mathcal{R}$ in the NMOM framework.  

A simplest idea under the update scheme~\eqref{eq:upd_form} is the Riemannian steepest-descent algorithm on the Nehari manifold  (RSD-$\mathcal{N}$) \cite{CLXY2025SISC}, which adopts the Riemannian steepest-descen direction $\boldsymbol{\eta}_n=- \nabla_{\mathcal{N}}E(\mathbf{u}_n)$ defined in \eqref{eq:Rgrad}, i.e.,
\begin{align}\label{eq:RSD}
    \mathbf{u}_{n+1} = \mathcal{R}_{\mathbf{u}_n}\left(- \alpha_n \nabla_{\mathcal{N}}E(\mathbf{u}_n)\right),\quad n=0,1,\ldots,
\end{align} 
where we use a fixed step-size $\alpha_n=\alpha>0$ for simplicity.

An explicit Nehari retraction and the Riemannian steepest-descent direction are computed as below. 
\begin{itemize}
\item {\bf Nehari retraction.}
By Lemma~\ref{lem:basic_properties}, the mapping $\rho: \mathbf{v}\mapsto s_{\mathbf{v}}$ defined as
\begin{align}\label{def:rho}
    \rho(\mathbf{v}) =\left( \frac{ K(\mathbf{v})}{I(\mathbf{v})} \right)^{{1}/{2}} = \left(\frac{\sum\limits_{i=1}^{m}\int_{\Omega}  \left(\varepsilon_i |\nabla v_i|^2+ a_iv_i^2 \right)d \mathbf{x}}{\sum\limits_{i,j=1}^m \int_{\Omega} g_{ij}v_{i}^2v_j^2d \mathbf{x}} \right)^{{1}/{2}}, \; \forall\; \mathbf{v}  = (v_1,\ldots,v_m) \in H \backslash \{0\}, 
\end{align}
is of $C^1$ from $H\backslash \{0\}$ to $\mathbb{R}^+$, and $\rho(\mathbf{v})\mathbf{v} \in \mathcal{N}$.  
Similar as in \cite{CLXY2025SISC}, based on this property, the Nehari retraction for the coupled system~\eqref{eq:model} can be given explicitly as
\begin{align}\label{eq:Nehari retraction}
    \mathcal{R}_{\mathbf{u}}(\boldsymbol{\xi}) := 
    \rho\!\left(\mathbf{u} +\boldsymbol{\xi} \right) \left(\mathbf{u} +\boldsymbol{\xi} \right)
    =\left( \frac{ K(\mathbf{u} +\boldsymbol{\xi})}{I(\mathbf{u} +\boldsymbol{\xi})} \right)^{{1}/{2}}\! \left(\mathbf{u} +\boldsymbol{\xi} \right), \qquad \forall\; (\mathbf{u}, \boldsymbol{\xi} ) \in T\mathcal{N}. 
\end{align}
\item {\bf Riemannian steepest-descent direction (RSD).}
The RSD of $E$ on the Nehari manifold $\mathcal{N}$ at $\mathbf{u}$ is given by $\boldsymbol{\eta}=- \nabla_{\mathcal{N}}E(\mathbf{u})$. 
From \eqref{eq:Rgrad}, the Riemannian gradient $\nabla_{\mathcal{N}} E(\mathbf{u})$ is determined by the $H$-gradients $\boldsymbol{\psi}=(\psi_1,\psi_2,\ldots,\psi_m):=\nabla E(\mathbf{u})$ and $\boldsymbol{\varphi} = (\varphi_1,\varphi_2,\ldots,\varphi_m):=\nabla G(\mathbf{u})$, yielding that
\begin{align}\label{eq:eta-SD}
\boldsymbol{\eta}=- \nabla_{\mathcal{N}}E(\mathbf{u})
=- \boldsymbol{\psi}+\frac{(\boldsymbol{\psi},\boldsymbol{\varphi})_H}{\|\boldsymbol{\varphi}\|_H^2}\boldsymbol{\varphi},
\end{align}
where $\boldsymbol{\psi}$ and $\boldsymbol{\varphi}$ are the solutions to the following $2m$ independent Poisson equations: 
\begin{align}\label{eq: R_grad_N}
\begin{cases}
- \varepsilon_i\Delta  \psi_i = -\varepsilon_i \Delta u_i + a_iu_i - b_i(\mathbf{u})u_i~~~~\quad\, \mbox{in }\Omega, & \psi_i|_{\partial\Omega}=0,\\
- \varepsilon_i \Delta \varphi_i = -2\varepsilon_i \Delta u_i + 2a_iu_i - 4b_i(\mathbf{u})u_i\quad \mbox{in }\Omega, &\varphi_i|_{\partial\Omega}=0,\\
 \end{cases}\quad i=1,2,\ldots,m.
\end{align} 
Here, $b_i(\mathbf{u}):=\sum_{j=1}^mg_{ij}u_j^2$. Clearly, when $a_1=\ldots=a_m=0$, there holds $\bm{\varphi}=4\bm{\psi}-2\mathbf{u}$, and the computation reduces to solving $m$ Poisson equations.
\end{itemize}

In the following, we develop a class of RAG-$\mathcal{N}$ algorithms for accelerating the computation of ground state solutions of the coupled system~\eqref{eq:model}.

\subsection{RAG-\texorpdfstring{$\mathcal{N}$}{N} algorithm}
The basic idea is inspired by the NAG method in unconstrained optimization problem $\min_{\mathbf{x}} f(\mathbf{x})$  in $\mathbb{R}^N$~\cite{NAG}, 
which is to firstly perform a linear extrapolation $\mathbf{y}_n = \mathbf{x}_{n} + t_n(\mathbf{x}_n - \mathbf{x}_{n-1})$ regarding the previous two iteration points $\mathbf{x}_{n}$ and $\mathbf{x}_{n-1}$ in $\mathbb{R}^N$ with a special parameter $t_n \in [0,1]$, and then update the $\mathbf{x}_{n+1}$ by the steepest-descent at $\mathbf{y}_n$, i.e. $\mathbf{x}_{n+1} = \mathbf{y}_n - \alpha \nabla f(\mathbf{y}_n)$. 
The term $t_n(\mathbf{u}_n - \mathbf{u}_{n-1})$ in the linear extrapolation is called a momentum and $t_n \in [0,1]$ is the momentum coefficient. 
Compared to the traditional steepest-descent method, the linear extrapolation with a special parameter involved in the NAG method serves as the main mechanism for the acceleration. However, such a linear extrapolation can not be directly applied on the nonlinear manifold such as the Nehari manifold. In the setting of the Riemannian optimization, the idea of extrapolation is generalized by other nonlinear operators related to the exponential mapping~\cite{Liu2017RAG,Zhang_pmlr2018}. However, for the coupled system~\eqref{eq:model}, the exponential mapping on the $C^1$ submanifold $\mathcal{N}$ may not exist. Even if it exist, it lacks explicit expression and its computation requires solving a geodesic equation, which is nonlinear and hard to implement. It is necessary to find an easy analogy to the linear extrapolation on the nonlinear submanifold $\mathcal{N}$. 

Let $\mathbf{u}_{n-1}$ and $\mathbf{u}_{n}$ be two previous iteration points on the Nehari manifold $\mathcal{N}$, define the linear extrapolation $\hat{\mathbf{w}}_n := \mathbf{u}_n +t_n(\mathbf{u}_n - \mathbf{u}_{n-1})$. Although $\hat{\mathbf{w}}_n\notin\mathcal{N}$ in general, Lemma~\ref{lem:linear exploa} states that $\hat{\mathbf{w}}_n\neq0$ and the mapping $\rho$ in \eqref{def:rho} can be used to pull it back into $\mathcal{N}$ by $ \mathbf{w}_n = \rho(\hat{\mathbf{w}}_n)\hat{\mathbf{w}}_n$. Denote 
\begin{align}\label{eq:curve-extrap}
    \gamma(t) = \rho\left(\mathbf{u}_n  +t(\mathbf{u}_{n} - \mathbf{u}_{n-1})\right)\left(\mathbf{u}_n  +t(\mathbf{u}_{n} - \mathbf{u}_{n-1})\right),\quad t \in \mathbb{R},
\end{align} 
then $\gamma(t)$ is a $C^1$ curve on $\mathcal{N}$ connecting $\gamma(-1) = \mathbf{u}_{n-1}$ and $\gamma(0) = \mathbf{u}_{n}$. The nonlinear extrapolation on $\mathcal{N}$ is implemented along this curve by choosing $t = t_n\in [0,1]$. The corresponding Riemannian accelerated gradient method on $\mathcal{N}$ (RAG-$\mathcal{N}$) is proposed by implementing the iterative scheme
\begin{align}\label{eq:RNAG}
\begin{cases}
\hat{\mathbf{w}}_n = \mathbf{u}_n +t_n(\mathbf{u}_n - \mathbf{u}_{n-1}), \\
\mathbf{w}_n = \rho(\hat{\mathbf{w}}_n) \hat{\mathbf{w}}_n, \\
\mathbf{u}_{n+1} = \mathcal{R}_{\mathbf{w}_{n}}(- \alpha \nabla_{\mathcal{N}} E(\mathbf{w}_{n})),\\
\end{cases} \quad n=1,2,\ldots,
\end{align}
with the given initial data $\mathbf{u}_1 = \mathbf{u}_0\in\mathcal{N}$. Actually, the combination of the first and second steps in~\eqref{eq:RNAG} performs exactly a nonlinear extrapolation along the curve \eqref{eq:curve-extrap} at $t=t_n$.

The geometric interpretation of the RAG-$\mathcal{N}$ method for the iterative scheme \eqref{eq:RNAG} is depicted in Figure~\ref{fig:RAG-1}. When $\mathbf{u}_{n}$ is sufficiently close to $\mathbf{u}_{n-1}$ (typically, when $\{\mathbf{u}_n\}$ is close to convergence), the following Lemma~\ref{lem:linear exploa} explains that the linear extrapolation $\hat{\mathbf{w}}_n$ is sufficiently close to  $\mathcal{N}$, and the pullback from $\hat{\mathbf{w}}_n$ to $ \mathbf{w}_n$ is only a small correction with magnitude $o(\|\mathbf{u}_n - \mathbf{u}_{n-1}\|_H)$ (as shown in Figure~\ref{fig:RAG-1}). Consequently, $\mathbf{w}_n = \rho(\hat{\mathbf{w}}_n)\hat{\mathbf{w}}_n$ specifies a simple extrapolation procedure along $\mathcal{N}$ with $\hat{\mathbf{w}}_n$ as its dominant component. 
\begin{lemma}\label{lem:linear exploa}
Under Assumption~\ref{ass:A}, let $\mathbf{u},\bar{\mathbf{u}}\in \mathcal{N}$ and $\hat{\mathbf{w}}=\mathbf{u}+t(\mathbf{u}-\bar{\mathbf{u}})$ with $t\in[0,1]$. Then \\ 
(i) $\hat{\mathbf{w}}\neq0$ so that $\mathbf{w}:=\rho(\hat{\mathbf{w}})\hat{\mathbf{w}}\in\mathcal{N}$; 
and \\
(ii) when $\|\mathbf{u}-\bar{\mathbf{u}}\|_H$ small enough,
\begin{align*}
\mathrm{dist}(\hat{\mathbf{w}},\mathcal{N}) \leq \|\mathbf{w}-\hat{\mathbf{w}}\|_H  =  o\left(\|\mathbf{u} -\bar{\mathbf{u}}\|_H\right),
\end{align*}
where $\mathrm{dist}(\hat{\mathbf{w}},\mathcal{N}):=\inf_{\tilde{\mathbf{w}}\in\mathcal{N}}\| \hat{\mathbf{w}} - \tilde{\mathbf{w}}\|_H$.
\end{lemma}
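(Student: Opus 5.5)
The plan is to handle (i) by contradiction, using the uniqueness of the ray critical point from Lemma~\ref{lem:basic_properties}(i). For (ii), I will show that $G(\hat{\mathbf{w}})$ is quadratically small in $\delta:=\|\mathbf{u}-\bar{\mathbf{u}}\|_H$, and then read off $\rho(\hat{\mathbf{w}})-1$ from the explicit formula \eqref{def:rho}.

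\emph{Part (i).} Write $\hat{\mathbf{w}}=(1+t)\mathbf{u}-t\bar{\mathbf{u}}$ and suppose $\hat{\mathbf{w}}=0$. If $t=0$, then $\mathbf{u}=0$, which contradicts $\mathbf{u}\in\mathcal{N}$. If $t\in(0,1]$, then $\mathbf{u}=s\bar{\mathbf{u}}$ with $s=t/(1+t)\in(0,1/2]$. Both $\bar{\mathbf{u}}$ and $s\bar{\mathbf{u}}$ lie in $\mathcal{N}$, so both $1$ and $s$ are critical points of $\Phi(\tau)=E(\tau\bar{\mathbf{u}})$ on $(0,\infty)$. Indeed, $\Phi'(\tau)=\tau^{-1}G(\tau\bar{\mathbf{u}})$. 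Lemma~\ref{lem:basic_properties}(i) says this critical point is unique, so $s=1$, a contradiction. Hence $\hat{\mathbf{w}}\neq0$, and by Lemma~\ref{lem:basic_properties}(i) we get $\mathbf{w}=\rho(\hat{\mathbf{w}})\hat{\mathbf{w}}\in\mathcal{N}$.

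\emph{Part (ii).} The first inequality $\mathrm{dist}(\hat{\mathbf{w}},\mathcal{N})\le\|\mathbf{w}-\hat{\mathbf{w}}\|_H$ is immediate because $\mathbf{w}\in\mathcal{N}$. For the second, set $\mathbf{h}=\mathbf{u}-\bar{\mathbf{u}}$. The functional $G=K-I$ is a polynomial functional, quadratic plus quartic. By \eqref{eq:sob_1} and the embedding $H_0^1\hookrightarrow L^4$, its second derivative is bounded on bounded subsets of $H$, say by $M$ on a ball containing $\mathbf{u}$, $\bar{\mathbf{u}}$ and $\hat{\mathbf{w}}$.

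Taylor expansion of $0=G(\bar{\mathbf{u}})=G(\mathbf{u}-\mathbf{h})$ about $\mathbf{u}$, using $G(\mathbf{u})=0$, gives $|\langle G'(\mathbf{u}),\mathbf{h}\rangle|\le M\delta^2/2$. Expanding $G(\hat{\mathbf{w}})=G(\mathbf{u}+t\mathbf{h})$ about $\mathbf{u}$ with $t\in[0,1]$ then yields $|G(\hat{\mathbf{w}})|\le M\delta^2$. Next,
\[
\rho(\hat{\mathbf{w}})^2=\frac{K(\hat{\mathbf{w}})}{I(\hat{\mathbf{w}})}=1+\frac{G(\hat{\mathbf{w}})}{I(\hat{\mathbf{w}})}.
\]
Since $I(\mathbf{u})=K(\mathbf{u})\ge c_1\sigma_0^2>0$ by Lemma~\ref{lem:basic_properties}(iii), and $I$ is continuous, $I(\hat{\mathbf{w}})\ge c_1\sigma_0^2/2$ once $\delta$ is small. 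Therefore $|\rho(\hat{\mathbf{w}})-1|\le|\rho(\hat{\mathbf{w}})^2-1|=O(\delta^2)$. Consequently
\[
\|\mathbf{w}-\hat{\mathbf{w}}\|_H=|\rho(\hat{\mathbf{w}})-1|\,\|\hat{\mathbf{w}}\|_H\le|\rho(\hat{\mathbf{w}})-1|\,\bigl(\|\mathbf{u}\|_H+\delta\bigr)=O(\delta^2)=o(\delta).
\]

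\emph{Main obstacle.} The delicate point is uniformity of the constants. The bound $M$ on $G''$ and the lower bound on $I(\hat{\mathbf{w}})$ both depend on $\|\mathbf{u}\|_H$. I would state the estimate with $\mathbf{u}$ fixed, or with $\mathbf{u}$ ranging over a bounded subset of $\mathcal{N}$ such as a sublevel set of $E$, where $\|\mathbf{u}\|_H^2\le K(\mathbf{u})/c_1=4E(\mathbf{u})/c_1$. On such a set, the Sobolev constants in \eqref{eq:sob_1} make both $M$ and the lower bound on $I$ explicit, and hence uniform.
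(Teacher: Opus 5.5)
Your proof is correct. Part (i) is the paper's argument, with the $t=0$ case made explicit.

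For part (ii) you take a different, more quantitative route.

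\textbf{The paper's route.} It uses only that $\rho$ is $C^1$ on $H\backslash\{0\}$. It Fr\'echet-expands $\rho$ at $\mathbf{u}$ and uses $\rho(\mathbf{u})=\rho(\bar{\mathbf{u}})=1$ to get $\langle\rho'(\mathbf{u}),\mathbf{u}-\bar{\mathbf{u}}\rangle=o(\|\mathbf{u}-\bar{\mathbf{u}}\|_H)$. It then expands $\rho(\hat{\mathbf{w}})-\rho(\mathbf{u})$ along $\hat{\mathbf{w}}-\mathbf{u}=t(\mathbf{u}-\bar{\mathbf{u}})$, which gives $|\rho(\hat{\mathbf{w}})-1|=o(\|\mathbf{u}-\bar{\mathbf{u}}\|_H)$.

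\textbf{Your route.} You use the same underlying idea: a function that vanishes at both endpoints has small first variation along the chord. You apply it to the polynomial constraint functional $G=K-I$ rather than to $\rho$, at second order with an explicit bound on $G''$. This gives $\langle G'(\mathbf{u}),\mathbf{u}-\bar{\mathbf{u}}\rangle=O(\|\mathbf{u}-\bar{\mathbf{u}}\|_H^2)$ and hence $G(\hat{\mathbf{w}})=O(\|\mathbf{u}-\bar{\mathbf{u}}\|_H^2)$. You then pass to $\rho$ through the identity $\rho^2=1+G/I$ together with a lower bound on $I(\hat{\mathbf{w}})$.

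\textbf{What each buys.} Your version gives the sharper estimate $\|\mathbf{w}-\hat{\mathbf{w}}\|_H=O(\|\mathbf{u}-\bar{\mathbf{u}}\|_H^2)$. Its constants depend only on an upper bound for $\|\mathbf{u}\|_H$, so the estimate is uniform over bounded subsets of $\mathcal{N}$, such as the energy sublevel sets you mention. That uniform form is what matters for consecutive iterates $\mathbf{u}_n,\mathbf{u}_{n-1}$ that both move. The paper's argument is shorter and does not use the quartic structure of $E$, so it applies verbatim whenever $\rho$ is merely $C^1$, as in the abstract NMOM framework. However, it only yields a pointwise little-$o$ statement with $\mathbf{u}$ held fixed.
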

\begin{proof}
Supposing $\hat{\mathbf{w}}=0$, one gets $\mathbf{u}=\frac{t}{1+t}\bar{\mathbf{u}}\in\mathcal{N}$, which means the existence of two distinct critical points $s_1=1$ and $s_2=t/(1+t)$ of the function $E(s\bar{\mathbf{u}})$ along the ray $\{s:s>0\}$. 
This contradicts Lemma~\ref{lem:basic_properties}, and $(i)$ is proved. 
In the following, we verify $(ii)$. 
Since $\mathbf{w}\in\mathcal{N}$, $\mathrm{dist}(\hat{\mathbf{w}},\mathcal{N}) \leq \|\mathbf{w} -\hat{\mathbf{w}}\|_H$.
Moreover, the facts $\rho(\mathbf{u})=\rho(\bar{\mathbf{u}})=1$ imply that $0= \rho(\bar{\mathbf{u}})-\rho(\mathbf{u})= \langle\rho'(\mathbf{u}), \bar{\mathbf{u}}-\mathbf{u}\rangle + o\left(\|\mathbf{u}-\bar{\mathbf{u}}\|_H\right)$.
It immediately leads to $\langle\rho'(\mathbf{u}), \mathbf{u}-\bar{\mathbf{u}}\rangle = o\left(\|\mathbf{u}-\bar{\mathbf{u}}\|_H\right)$.
Then, noticing $\mathbf{w}=\rho(\hat{\mathbf{w}})\hat{\mathbf{w}}$, $\rho(\mathbf{u})=1$, and $\|\hat{\mathbf{w}}\|_H=\|\mathbf{u}+t(\mathbf{u}-\bar{\mathbf{u}})\|_H \leq \|\mathbf{u}\|_H+t\|\mathbf{u}-\bar{\mathbf{u}}\|_H$ with $t\in[0,1]$, one has
\begin{align*}
\|\mathbf{w}-\hat{\mathbf{w}}\|_H 
&= \|\hat{\mathbf{w}}\|_H \big|\rho(\hat{\mathbf{w}})-\rho(\mathbf{u})\big| \\
&= \|\hat{\mathbf{w}}\|_H \Big| \left\langle\rho'(\mathbf{u}), \hat{\mathbf{w}}-\mathbf{u}\right\rangle + o\left(\|\hat{\mathbf{w}}-\mathbf{u}\|_H\right) \Big| \\
&= t\|\hat{\mathbf{w}}\|_H \Big| \left\langle\rho'(\mathbf{u}), \mathbf{u}-\bar{\mathbf{u}}\right\rangle + o\left(\|\mathbf{u}-\bar{\mathbf{u}}\|_H\right)\Big|  \\
&=o\left(\|\mathbf{u}-\bar{\mathbf{u}}\|_H\right),
\end{align*}
yielding the conclusion in $(ii)$.
The proof is finished.
\end{proof}

\begin{figure}[!t]
    \centering
    \includegraphics[width = 0.49\textwidth]{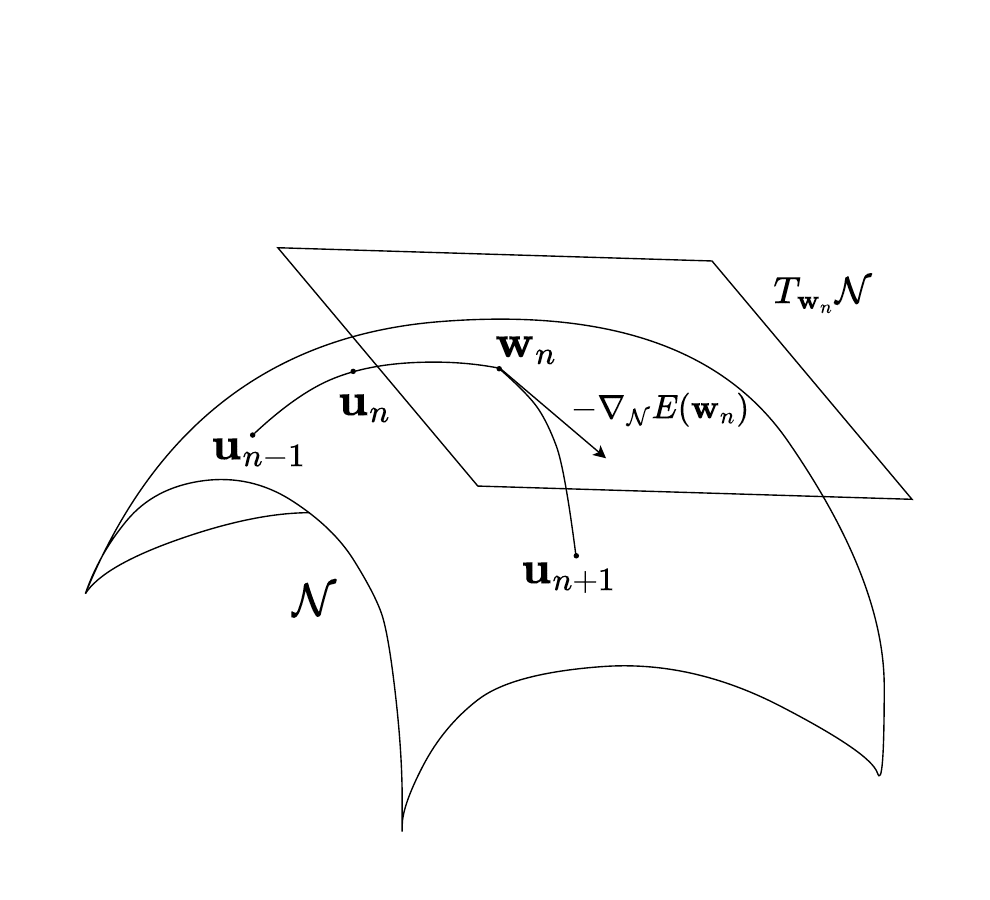}
     \includegraphics[width = 0.49\textwidth]{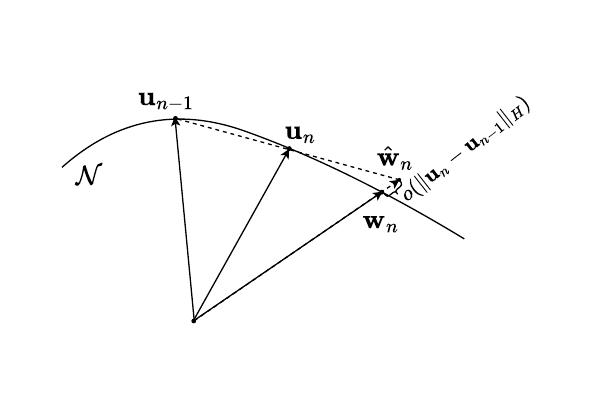}
    \caption{Illustration of the iterative scheme \eqref{eq:RNAG} (left) and the extrapolation (right).}
    \label{fig:RAG-1}
\end{figure}
The choice of the parameter $t_n$ is important for accelerating the convergence. Since the nonlinear extrapolation in~\eqref{eq:RNAG} closely resembles linear extrapolation, we may directly use the original scheme in NAG ~\cite{NAG} to compute $t_n$. 
we take $t_n =(\theta_{n-1}-1)/\theta_n$ ($n\geq1$) with $\theta_{n}$ iteratively computed by
\begin{align}\label{eq:mom_coe}
    \theta_0 = 0,\quad 
        \theta_{n} = \frac{1+\sqrt{1+4\theta_{n-1}^2}}{2}, \quad n=1,2,\ldots.
\end{align}
Clearly, $\{\theta_n\}$ is strictly increasing and $t_n\in(0,1)$ for $n\geq3$. In addition, $\theta_n$ and $t_n$ have the asymptotic formulas $\theta_n\sim n/2$ and $t_n\sim(n-2)/(n+1)$, respectively, for large $n$. Finally, by incorporating the expressions for the Nehari retraction in \eqref{eq:Nehari retraction}, the mapping $\rho$ in \eqref{def:rho} and the Riemannian steepest-descent direction $\bm{\eta}_n=-\nabla_{\mathcal{N}}E(\mathbf{u}_n)$ in \eqref{eq:eta-SD}-\eqref{eq: R_grad_N}, the RAG-$\mathcal{N}$ algorithm for the iterative scheme \eqref{eq:RNAG} is detailed in Algorithm~\ref{al:RNAG}. 

\begin{algorithm}[!t]
	\caption{RAG-$\mathcal{N}$ algorithm}	\label{al:RNAG}
	Initialize $\mathbf{u}_1 =\mathbf{u}_0 \in \mathcal{N}$, $\theta_0 = 0$ and a fixed step-size $\alpha>0$.  Set $n=1$.\\
	\While {stop criterion is not met} 
	{ 
    Update $\theta_n$ by~\eqref{eq:mom_coe} and set $t_n=(\theta_{n-1}-1)/\theta_n$. \\
    Compute $\hat{\mathbf{w}}_n = \mathbf{u}_n +t_n(\mathbf{u}_n - \mathbf{u}_{n-1})$ and $\mathbf{w}_n = \rho(\hat{\mathbf{w}}_n) \hat{\mathbf{w}}_n$ by employing \eqref{def:rho}.\\
    Solve \eqref{eq: R_grad_N} with $\mathbf{u}=\mathbf{w}_n$ and compute $\boldsymbol{\eta}_n=- \nabla_{\mathcal{N}}E(\mathbf{w}_n)$ according to \eqref{eq:eta-SD}. \\
    Compute $\mathbf{u}_{n+1}=\mathcal{R}_{\mathbf{w}_{n}}(\alpha\bm{\eta}_n)$ by employing \eqref{eq:Nehari retraction} and \eqref{def:rho}. \\
    $n: = n+1$.
	}
\end{algorithm} 
The RAG-$\mathcal{N}$ algorithm in Algorithm \ref{al:RNAG} exhibits a clear acceleration for the original NMOA in the number of iterations, which will be shown in the following numerical experiments.

\subsection{RAG-\texorpdfstring{$\mathcal{N}$}{N} algorithm with nonmonotone search (nmRAG-\texorpdfstring{$\mathcal{N}$}{N} algorithm)}\label{sec:Nom-AG}
The Algorithm~\ref{al:RNAG} is essentially a nonmonotone algorithm. When the step-size $\alpha$ is chosen too large, the nonmonotone behavior would become more severe and eventually lead to a divergence, as observed in Section \ref{Sec:numerical result}. In this subsection, we propose an improved RAG-$\mathcal{N}$ algorithm with better robustness.

The RAG-$\mathcal{N}$ algorithm in Algorithm \ref{al:RNAG} can be viewed as an extrapolated variant of the original RSD-$\mathcal{N}$ algorithm. From the perspective of the RSD, a common strategy to make the iterative process more effective and reliable is to incorporate a suitable step-size search rule. Since the RAG-$\mathcal{N}$ algorithm in Algorithm \ref{al:RNAG} produces sequences that are typically nonmonotone w.r.t. the variational functional $E$, a natural idea is to combine it with a certain nonmonotone step-size search, such that each iteration satisfies a flexible functional value-decrease condition while maintaining high efficiency. Inspired by~\cite{APG2015}, at each iteration we compute both the RAG-$\mathcal{N}$ update in \eqref{eq:RNAG} and the RSD update \eqref{eq:RSD} with a nonmonotone step-size search rule \cite{CLXY2025SISC}, and then select the one with the lower variational functional value as the next iterative point. The implementation for the RAG-$\mathcal{N}$ method with the nonmonotone step-size search (nmRAG-$\mathcal{N}$) are detailed in the following iterative scheme:
\begin{align} 
 \mathbf{u}_{n+1}  = 
    \begin{cases}
       \mathbf{z}_{n+1} ,\;\; \text{if } E(\mathbf{z}_{n+1}) \leq E(\mathbf{v}_{n+1}),\\
       \mathbf{v}_{n+1},\;\; \text{otherwise}.
    \end{cases} \label{eq:nonm_u}
\end{align}
Here $\mathbf{z}_{n+1}$ is computed by RAG-$\mathcal{N}$, i.e., $\mathbf{z}_{n+1}=\mathcal{R}_{\mathbf{w}_{n}}\left(-\alpha \nabla_{\mathcal{N}}E(\mathbf{w}_n)\right)$ with $\mathbf{w}_n=\rho(\hat{\mathbf{w}}_{n}) \hat{\mathbf{w}}_{n}$ defined in \eqref{eq:RNAG}, and $\mathbf{v}_{n+1} = \mathcal{R}_{\mathbf{u}_n}\left(-\alpha_0\beta^j \nabla_{\mathcal{N}}E(\mathbf{u}_n) \right)$ is obtained by selecting the smallest non-negative integer $j\geq 0$ satisfying the following nonmonotone Armijo condition \cite{CLXY2025SISC,Hongchao2004A} 
\begin{align}\label{eq:Armijo descent}
    E(\mathbf{v}_{n+1}) \leq C_n - \sigma \alpha_0\beta^j \|\nabla_{\mathcal{N}} E(\mathbf{u}_n)\|_H^2,
\end{align}
where $\alpha_0$ is a given initial trial step-size, $\beta \in (0,1)$ is a backtracking factor, and $C_n$ ($n\geq 1$) is computed iteratively as 
\begin{align}\label{eq:C_k}
   C_{n} = \frac{\varrho Q_{n-1} C_{n-1} + E(\mathbf{u}_{n})}{Q_{n}},\quad
    Q_{n} = \varrho Q_{n-1} +1,
\end{align} 
with given $\sigma, \varrho \in (0,1)$, $C_0 = E(\mathbf{u}_0)$ and $Q_0 = 1$. Finally, the implementation of the nmRAG-$\mathcal{N}$ algorithm is presented in Algorithm~\ref{al:modified RNAG}. 

\begin{algorithm}[!t]
	\caption{nmRAG-\!$\mathcal{N}$ algorithm}
	\label{al:modified RNAG}
	
	In addition to the initial setting in Algorithm~\ref{al:RNAG}, initialize  $ \varrho, \beta, \sigma  \in (0,1)$, $\alpha_0>0$, $Q_0 = 1$ and $C_0 = E(\mathbf{u}_{0})$. Set $n=1$.   \\
	\While {stop criterion is not met} 
	{ 
    Compute $\hat{\mathbf{w}}_n = \mathbf{u}_n +t_n(\mathbf{u}_n - \mathbf{u}_{n-1})$ and $\mathbf{w}_n = \rho(\hat{\mathbf{w}}_n) \hat{\mathbf{w}}_n$ by the same procedures as in Algorithm~\ref{al:RNAG}.
    
    Compute $\mathbf{z}_{n+1}=\mathcal{R}_{\mathbf{w}_{n}}\left(-\alpha \nabla_{\mathcal{N}}E(\mathbf{w}_n)\right)$.   
   
    Compute $Q_{n}$ and $C_{n}$ by~\eqref{eq:C_k}. Set $\alpha_n = \alpha_0$.
    
    \While{ $E\left(\mathcal{R}_{\mathbf{u}_n}(-\alpha_n\nabla_{\mathcal{N}}E(\mathbf{u}_n))\right) > C_n - \sigma \alpha_n \|\nabla_{\mathcal{N}} E(\mathbf{u}_n) \|_H^2$ }
    {
    \vspace{2pt}
    
    $\alpha_n = \alpha_n \beta$
    }
    
    $\mathbf{v}_{n+1} = \mathcal{R}_{\mathbf{u}_n} (-\alpha_n \nabla_{\mathcal{N}}E(\mathbf{u}_n))$. 

    \vspace{2pt}
    
    \eIf{ $E(\mathbf{z}_{n+1}) \leq E(\mathbf{v}_{n+1})$ } 
    { 
    \vspace{2pt}
    
    $\mathbf{u}_{n+1} = \mathbf{z}_{n+1}$.
    }
    {
    \vspace{2pt} 
    
    $\mathbf{u}_{n+1} = \mathbf{v}_{n+1}$.
    }
    $n = n+1$.
	}
\end{algorithm}

\begin{remark}
Compared to Algorithm~\ref{al:RNAG}, Algorithm~\ref{al:modified RNAG} provides a better control over the functional value. In fact, the combination of \eqref{eq:nonm_u} and \eqref{eq:Armijo descent} states that  
\begin{align}\label{eq:nonmo}
    E(\mathbf{u}_{n+1}) \leq C_n - \sigma \alpha_n \|\nabla_{\mathcal{N}}E(\mathbf{u}_n)\|_H^2.
\end{align}
With this fact, one can verify that $E(\mathbf{u}_n) \leq C_n \leq C_{n-1}\leq C_0 =  E(\mathbf{u}_0)$, $k=0,1,\ldots$, and sequences $\{C_n\}$ and  $\{E(\mathbf{u}_n)\}$ converge to the same limit; see Lemmas 3.7 and 4.4 in \cite{CLXY2025SISC}. Furthermore, following the proof of \cite[Theorem~4.3]{CLXY2025SISC}, where~\eqref{eq:nonmo} plays a crucial role, it can be shown that every accumulation point of the sequence $\{\mathbf{u}_n\}$ generated by Algorithm~\ref{al:modified RNAG} is exactly a nontrivial solution of the coupled system~\eqref{eq:model}.
\end{remark}

\begin{remark}
We remark here that Algorithms~\ref{al:RNAG}-\ref{al:modified RNAG} are about the iteration of sequences on the infinite-dimensional Nehari manifold $\mathcal{N}$, leaving some subproblems including the computation of the RSD \eqref{eq: R_grad_N} and the retraction~\eqref{eq:Nehari retraction}. There are various options for the numerical discretization of these subproblems, such as the finite element methods, finite difference methods or spectral methods. In our numerical test, we adopt the sine pseudospectral method (see Appendix~\ref{sec:Discre}), which has high efficiency and spectral accuracy and is applicable to the box domain with Dirichlet boundary conditions. 
\end{remark}

\begin{remark}
In our numerical test, the stopping criterion in Algorithms~\ref{al:RNAG}-\ref{al:modified RNAG} is taken as the following condition on the maximum residual of the coupled system \eqref{eq:model} (in the discrete level):
\begin{align}\label{eq:residual}
r_n:=\max_{1\leq i\leq m}\bigg\|- \varepsilon_i\Delta u_{ni}+a_i u_{ni}- \sum_{j=1}^{m}g_{ij}u_{nj}^2u_{ni}\bigg\|_{\infty} \leq \varepsilon_{\mathrm{tol}},
\end{align}
with a given tolerance $\varepsilon_{\mathrm{tol}}>0$. It can also be chosen as the examination on the Riemannian gradient or the $H$-gradient. 
\end{remark}

\section{Numerical results}\label{Sec:numerical result}
In this section, we firstly test the performance of the proposed RAG-$\mathcal{N}$ algorithm in Algorithm~\ref{al:RNAG} and  nmRAG-$\mathcal{N}$ algorithm in Algorithm~\ref{al:modified RNAG} with the RSD-$\mathcal{N}$ algorithm in~\eqref{eq:RSD}. 
Then apply them to compute the ground state solutions of the coupled system~\eqref{eq:model} under different parameters and potentials. 
Particularly, for $m=2$, the symmetry-breaking phenomenon of the ground state solutions of the coupled system~\eqref{eq:model} under symmetric/asymmetric Gaussian potentials, and sharp peaks ground state solutions of the coupled system~\eqref{eq:model} with small diffusion coefficients $\varepsilon_i$ are investigated. 
Moreover, semi-trivial ground state solutions of the coupled system~\eqref{eq:model} with $m=2$, $m=3$ and $m=4$ under constant potentials are also studied.

In the numerical test, the computational domain is fixed as $\Omega = (-1,1)^2$. Unless specified, the mesh size in the sine pseudospectral discretization is selected as $1/32$, and the initial guess is computed by $\mathbf{u}_0 = \rho(\hat{\mathbf{u}}_0)\hat{\mathbf{u}}_0$, where $\hat{\mathbf{u}}_0 = (\hat{u}_{01},\hat{u}_{02},\ldots,\hat{u}_{0m})$ is chosen as 
\begin{align}\label{eq:initial}
\hat{u}_{01}(\mathbf{x}) = \hat{u}_{02}(\mathbf{x}) = \cdots = \hat{u}_{0m}(\mathbf{x}) = e^{-16(x^2+y^2)},\quad \mathbf{x}=(x,y)\in\Omega.
\end{align}
The stopping criterion \eqref{eq:residual} with the tolerance $\varepsilon_{\mathrm{tol}} = 10^{-6}$ is employed. For the nmRAG-$\mathcal{N}$ algorithm in Algorithm~\ref{al:modified RNAG}, the initial trial step-size $\alpha_0=0.1$, the parameters for the nonmonotone condition are selected as $\sigma = 10^{-3}$, $\varrho = 0.85$ and the backtracking coefficient $\beta = 0.25$.

\subsection{Performance comparisons of the (nm)RAG-\texorpdfstring{$\mathcal{N}$}{N} algorithm with the RSD-\texorpdfstring{$\mathcal{N}$}{N} algorithm} \label{subsec:ptest}
In this subsection, we compare the performance of the RAG-$\mathcal{N}$ algorithm in Algorithm~\ref{al:RNAG} and the nmRAG-$\mathcal{N}$ algorithm in Algorithm~\ref{al:modified RNAG} with the RSD-$\mathcal{N}$ algorithm in~\eqref{eq:RSD} under a fixed step-size $\alpha_n=\alpha>0$ for computing the ground state solutions of the coupled system~\eqref{eq:model} in the following two examples. 

\begin{description}
\item[{\bf Example 1:}] Fix $m=3$,  $\varepsilon_1 = \varepsilon_2 = \varepsilon_3=1$, $a_1 = a_2 = a_3 = 2(x^2 + y^2 + 1)$, $g_{11} = 2$, $g_{22} = 4$, $g_{33} = 6$ and $g_{12} = g_{13} = 4$ in the coupled system~\eqref{eq:model} 
\item[{\bf Example 2:}] Fix $m=4$, $\varepsilon_1 = \varepsilon_2 = \varepsilon_3 = \varepsilon_4=1$, $a_1 = a_2 = a_3 = a_4 =  2(x^2+y^2 +1)$, $g_{11} = 2$, $g_{22} = 4$, $g_{33} = 6$, $g_{44} = 8$ and  $g_{12} = g_{13} = g_{14} = g_{23} = g_{24} =  4$ in the coupled system~\eqref{eq:model}.
\item[{\bf Example 3:}] Fix $m=4$, $\varepsilon_1 = \varepsilon_2 = \varepsilon_3 = \varepsilon_4=1$, $a_1 = a_2 = a_3 = a_4 =  2(x^2+y^2 +1)$, $g_{11} = 2$, $g_{22} = 4$, $g_{33} = 6$, $g_{44} = 8$,  $g_{12} =  g_{23} = g_{24} =  4$,  $g_{13} = g_{14} = 2$, and $g_{34} = 8$ in the coupled system~\eqref{eq:model}.
\end{description}

Firstly, we numerically test the efficiency of the RAG-$\mathcal{N}$ algorithm in Algorithm~\ref{al:RNAG} compared with the RSD-$\mathcal{N}$ algorithm for computing the ground state solutions of the coupled system~\eqref{eq:model} in {\bf Examples 1-2}. Comparisons with the number of iterations and CPU time for reaching the stopping criterion $r_n \leq \varepsilon_{\mathrm{tol}}$ \eqref{eq:residual} are shown in Table~\ref{tab:RAG-RSD}. We can observe in Table~\ref{tab:RAG-RSD} that the RAG-$\mathcal{N}$ algorithm usually spends much less iterations than the RSD-$\mathcal{N}$ algorithm to reach the stopping criterion $r_n \leq \varepsilon_{\mathrm{tol}}$ \eqref{eq:residual}. Especially for {\bf Example 1} with $g_{23} = 6$ and {\bf Example 2} with $ g_{34} = 8$, when $\alpha = 0.1$, the RSD-$\mathcal{N}$ algorithm cannot reach the stopping condition within 100000 iterations, but the RAG-$\mathcal{N}$ algorithm only takes no more than 1000 iterations.  Regarding CPU time, although each iteration of RAG-$\mathcal{N}$ involves an additional nonlinear extrapolation compared with RSD-$\mathcal{N}$ and thus is slightly more expensive per iteration, the substantial reduction in iteration count leads to a significant decrease in overall CPU time. For instance, in {\bf Example 1} with $g_{23} = 6$ and $\alpha = 0.1$, RAG-$\mathcal{N}$ completes in 6.95 seconds, whereas RSD-$\mathcal{N}$ takes more than 1756.36 seconds. In addition, we can also observe that when the step-size $\alpha$ reduces from $\alpha = 0.1$ to $\alpha = 0.01$, the number of iterations required by the RSD-$\mathcal{N}$ algorithm is approximately inversely proportional to $\alpha$, while it  shows a slower growth  with the decrease of $\alpha$ for the RAG-$\mathcal{N}$ algorithm. 

\begin{table}[!t]
	\renewcommand{\arraystretch}{1.5}
	\centering
    \footnotesize 
	\caption{ Comparison of the RSD-$\mathcal{N}$ algorithm and the RAG-$\mathcal{N}$ algorithm in Algorithm~\ref{al:RNAG} for computing the ground states of the coupled system~\eqref{eq:model} in {\bf Examples 1-2} with the stopping criterion $r_n \leq \varepsilon_{\mathrm{tol}}$ \eqref{eq:residual}. 
    "iter" and "time" denote the number of the iterations and  the total CPU time in seconds (s), respectively.   }
	\vspace{2pt}
	\label{tab:RAG-RSD}
\begin{tabular}{clrrrrrr}
\hline
 & &\multicolumn{3}{c}{$\alpha = 0.1$} &\multicolumn{3}{c}{$\alpha = 0.01$}  \\  \cline{3-8} 
\multirow{-2}{*}{} &\multirow{-2}{*}{Method}&\multicolumn{1}{c}{iter} &\multicolumn{1}{c}{time (s)}  &\multicolumn{1}{c}{$r_n$ }&\multicolumn{1}{c}{iter} &\multicolumn{1}{c}{time (s)} &\multicolumn{1}{c}{$r_n$} \\ \hline
&RSD-$\mathcal{N}$  & 100000 & 1756.36 & 1.54$\times10^{-5}$ & 100000 & 749.92 &4.91$\times10^{-4}$ 
\\
\multicolumn{1}{c}{\multirow{-2}{*}{\begin{tabular}[c]{@{}c@{}}
{\bf Example 1} \\ with  $ g_{23} = 6$\end{tabular}} } & RAG-$\mathcal{N}$ & 825 &6.95 & 9.89$\times10^{-7}$ & 2727 &23.41 & 9.24$\times10^{-7}$   \\ \hline
&RSD-$\mathcal{N}$ & 316 & 2.53& 9.79$\times10^{-7}$ & 3231 &23.77 & 9.95$\times10^{-7}$ \\
\multicolumn{1}{c}{\multirow{-2}{*}{\begin{tabular}[c]{@{}c@{}}
{\bf Example 1} \\ with  $ g_{23} = 8$\end{tabular}} } &RAG-$\mathcal{N}$ & 382 &3.54 & 7.96$\times10^{-7}$ & 3100 &26.09& 9.93$\times10^{-7}$  \\ \hline
& RSD-$\mathcal{N}$ & 100000 &4780.84 & 1.54$\times10^{-5}$ & 100000 & 986.97 & 4.92$\times10^{-4}$  \\
\multicolumn{1}{c}{\multirow{-2}{*}{\begin{tabular}[c]{@{}c@{}}
{\bf Example 2}\\ with  $ g_{34} = 8$\end{tabular}} } & RAG-$\mathcal{N}$  & 932 &10.30   & 9.97$\times10^{-7}$ & 2853 &31.37  & 9.83$\times10^{-7}$ \\ \hline
&RSD-$\mathcal{N}$ & 352 &3.39 & 9.79$\times10^{-7}$ & 3576 &34.08 & 9.99$\times10^{-7}$  \\
\multicolumn{1}{c}{\multirow{-2}{*}{\begin{tabular}[c]{@{}c@{}}
{\bf Example 2} \\ with  $ g_{34} = 10$\end{tabular}} } &RAG-$\mathcal{N}$ & 357 &3.91 & 8.05$\times10^{-7}$ & 2934 &32.25 & 8.49$\times10^{-7}$  \\ \hline
\end{tabular}
\end{table}

Moreover, to test the efficiency and the convergence of the nmRAG-$\mathcal{N}$ algorithm in Algorithm~\ref{al:modified RNAG}, Figure~\ref{fig:eff_comp1}  compares it with the RAG-$\mathcal{N}$  and RSD-$\mathcal{N}$ algorithms under different step sizes for computing the ground state solutions of the coupled system~\eqref{eq:model} in {\bf Examples 3}. When $\alpha = 0.1$,  all three methods converge. As observed in the first plot of Figure~\ref{fig:eff_comp1}, nmRAG-N requires the fewest iterations to satisfy the stopping criterion $r_n \leq \varepsilon_{\mathrm{tol}}$ \eqref{eq:residual}. Compared to the RSD-$\mathcal{N}$ algorithm, the (nm)RAG-$\mathcal{N}$ algorithm exhibits a rapidly oscillating decline of the residual $r_n$ defined in \eqref{eq:residual} w.r.t. the number of the iteration $n$. Such oscillatory behavior is also observed in the classical Nesterov's accelerated gradient method for convex optimization problems in $\mathbb{R}^N$. When the step-size $\alpha = 1$, the plot of RAG-$\mathcal{N}$  and nmRAG-$\mathcal{N}$ algorithm  coincide. In nmRAG-$\mathcal{N}$ algorithm, the candidate iterates generated by the nonmonotone step-size search are not accepted, and the iteration reduces to the standard Nesterov-type update, i.e. $u_{n+1} = v_{n+1}$. When the step-size is increased to $\alpha = 1.1$, the RAG-$\mathcal{N}$ algorithm diverges, whereas nmRAG-$\mathcal{N}$ algorithm still converges and reaches the stopping criterion in fewer iterations than RSD-$\mathcal{N}$ algorithm. From the last plot in Figure~\ref{fig:eff_comp1}, we observe that the curves of RAG-$\mathcal{N}$ and nmRAG-$\mathcal{N}$ algorithm initially overlap. At $n=14$, a bifurcation occurs: the residual $r_n$ in RAG-$\mathcal{N}$ algorithm begins to increase, while in nmRAG-$\mathcal{N}$ algorithm it continues to exhibit an oscillatory decline. At this stage, the nonmonotone step-size search step~\eqref{eq:nonmo}  becomes active in nmRAG-$\mathcal{N}$ algorithm, and the new iterate is determined by $u_{n+1} = v_{n+1}$. This adaptive mechanism appears to be crucial in maintaining convergence beyond the stability threshold of the standard RAG-$\mathcal{N}$ algorithm.
 
Overall, these numerical results indicate that nmRAG-$\mathcal{N}$ algorithm preserves the acceleration properties of RAG-$\mathcal{N}$ algorithm,  while exhibiting enhanced robustness with respect to the step-size selection, effectively enlarging the practical stability region of the accelerated iteration.

\begin{figure}[!t]
	\centering
	\subfloat[$\alpha = 0.1$]{
    \includegraphics[width=0.3\textwidth,height=0.25\textwidth]{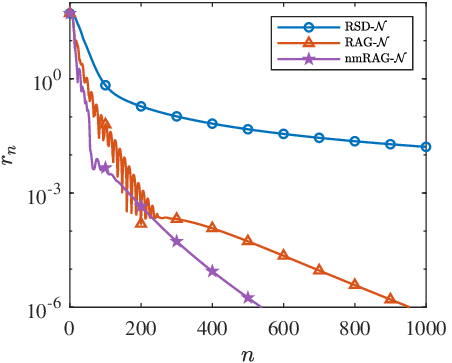}}
    \subfloat[$\alpha = 1$]{
    \includegraphics[width=0.3\textwidth,height=0.25\textwidth]{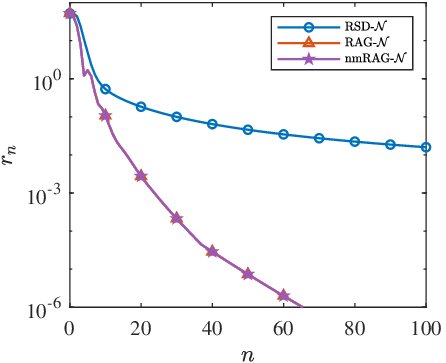}}
    \subfloat[$\alpha = 1.1$]{
    \includegraphics[width=0.3\textwidth,height=0.25\textwidth]{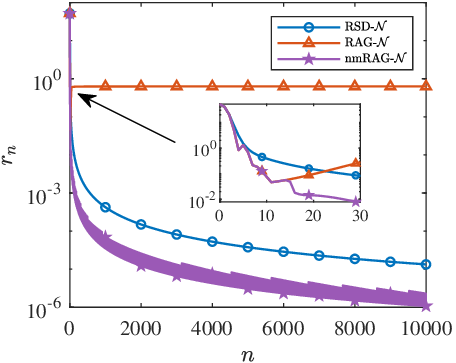}}
    \caption{ The residuals $r_n$ in \eqref{eq:residual} w.r.t. $n$ of the RAG-$\mathcal{N}$ and nmRAG-$\mathcal{N}$ algorithms for computing the ground state solutions of the coupled system~\eqref{eq:model} in {\bf Example 3}  when $\alpha = 0.1$, $1$, $1.1$.}
	\label{fig:eff_comp1} 
\end{figure}

\subsection{The properties of the ground state solutions}

Based on the performance in Section~\ref{subsec:ptest}, we compute the ground state solutions of the coupled system~\eqref{eq:model} and explore their properties such as the symmetry-breaking phenomenon, semi-trivial ground state solutions and sharp peaks ground state solutions with small diffusion coefficients $\varepsilon_i$, by utilizing the nmRAG-$\mathcal{N}$ algorithm in Algorithm~\ref{al:modified RNAG} with $\alpha = 0.1 = \alpha_0$, $\sigma = 10^{-3}$, $\varrho = 0.85$ and $\beta = 0.25$.

\subsubsection{Symmetry-breaking ground state solutions under symmetric/asymmetric Gaussian potentials}
Note that when the potential is selected as the harmonic oscillator potential, the corresponding nonlinear Schr\"odinger system ~\eqref{eq:NLSE} can model the BEC trapped in the external potential. Here, we add the potential of a stirrer corresponding to a far-blue detuned Gaussian laser beam~\cite{Bao-Du2004,Jackson-PRL-1998}, 
 \begin{align}\label{eq:Guass}
     a_i = x^2+y^2 + w_i \exp({-\delta_i\left( (x - x_i^c)^2 + (y-y_i^c)^2 \right)}), \quad i=1,2,\ldots,m.
 \end{align}
For fixed $m=2$, $\varepsilon_1 = \varepsilon_2 = 1$, $g_{11} = 1$, $g_{12} = 10$ and $ g_{22} = 3$, we investigate the symmetry-breaking phenomenon of the ground state solutions of the coupled system~\eqref{eq:model} under symmetric/asymmetric Gaussian potentials with different coefficients ($x_i^c$, $y_i^c$, $w_i$ and $\delta_i$). Here, a function on the rectangular domain $\Omega = (-1,1)^2$ is said to be symmetric if it satisfies symmetry with respect to both coordinate axes and diagonals; Otherwise, it is asymmetric. Since the coupled system~\eqref{eq:model} has asymmetry ground state solutions, the initial guess is generated by multiplying~\eqref{eq:initial} with a random value sampled in $(0,1)$ at each point, which is realized by the function ``rand'' in the Matlab.  

Firstly, choose $(x_1^c,y_1^c) = (0.5,0.5)$ and $(x_2^c,y_2^c) = (-0.5,-0.5)$, then the potentials $a_i$ are asymmetric. The profiles of the ground state solution of the coupled system~\eqref{eq:model} under asymmetric Gaussian potentials with different $w_i$ or $\delta_i$ are shown in Figures~\ref{fig:Guass1}-\ref{fig:Guass2}. For fixed $\delta_1 = \delta_2 = 1$, it can be observed from Figure~\ref{fig:Guass1} that the ground state solution of the coupled system~\eqref{eq:model} is symmetry when $w_1 = w_2 = 1$. As the $w_i$ increase to $10$, the peak of the ground state solution shifts from the center to the corner of the domain. The symmetry-breaking phenomenon happens and the critical value is around at $w_1=w_2 = 8$. Then, for fixed $w_1 = w_2=10$, Figure~\ref{fig:Guass1} shows that the ground state solution of the coupled system~\eqref{eq:model} is asymmetry when $\delta_1 =\delta_2  = 1$ and the peak shifts from the corner to the center of the domain as $\delta_1$ and $\delta_2$ increase to $100$. It indicates that the ground state solution changes from the asymmetric case to the symmetric case as the $\delta_i$ increase. Furthermore, choose {$(x_1^c,y_1^c) = (x_2^c,y_2^c) = (0,0)$} such that potentials $a_i$ are symmetric. The ground state solutions of the coupled system~\eqref{eq:model} under symmetric Gaussian potentials with different $w_i$ or $\delta_i$ shown in Figures~\ref{fig:Guass4}-\ref{fig:Guass3}. It is observed that the symmetry-breaking phenomenon also happens even if the potentials are symmetric.

\begin{figure}[!t]
	\begin{minipage}{0.48\textwidth}
		\centering
		\subfloat[ $w_1 = w_2 = 1$]{\includegraphics[width=0.99\linewidth]{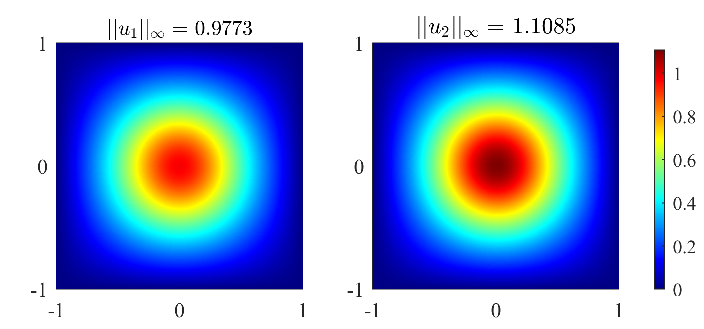}}\\
		\vspace{-1.5ex}
		\subfloat[$w_1 = w_2 = 8$]{\includegraphics[width=0.99\linewidth]{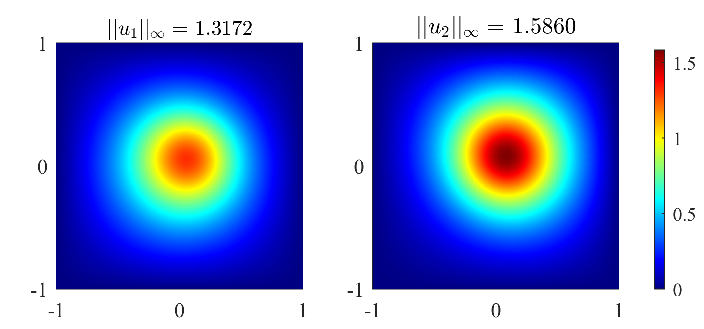}}\\
		\vspace{-1.5ex}
		\subfloat[ $w_1 = w_2 = 10$]{\includegraphics[width=0.99\linewidth]{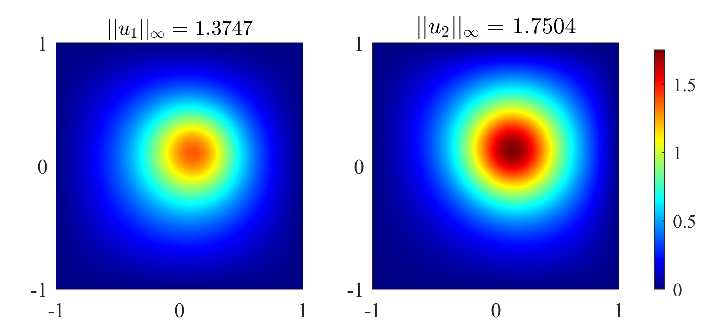}}
		\caption{Profiles of ground state solutions under the asymmetric Gaussian potential $\eqref{eq:Guass}$ for fixed $\varepsilon_1 = \varepsilon_2 = 1$, $g_{11} = 1$, $g_{12} = 10$, $ g_{22} = 3$, $(x_1^c,y_1^c) = (0.5,0.5)$, $(x_2^c,y_2^c) = (-0.5,-0.5)$ and $\delta_1 = \delta_2 = 1$, with different $w_i$.}
		\label{fig:Guass1}
	\end{minipage}
	\quad
   \begin{minipage}{0.48\textwidth}
   \centering
    \subfloat[$\delta_1 = \delta_2 = 1$]{\includegraphics[width=0.99\linewidth]{figure//prfi_ground//Nehari//Guass//r0x1-0.5_r0x2-_0.5_r0y1-0.5_r0y2-_0.5//beta11-1_beta12-10_beta22-3//harmonic//w0-10_delta-1.eps}}\\
    \vspace{-1.5ex}
    \subfloat[$\delta_1 = \delta_2 = 5$]{\includegraphics[width=0.99\linewidth]{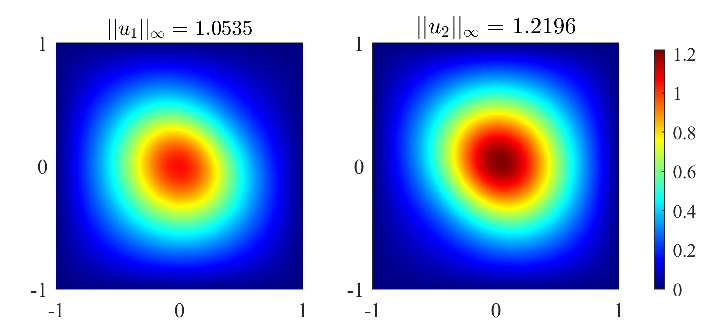}}\\
    \vspace{-1.5ex}
    \subfloat[$\delta_1 = \delta_2 = 100$.]{\includegraphics[width=0.99\linewidth]{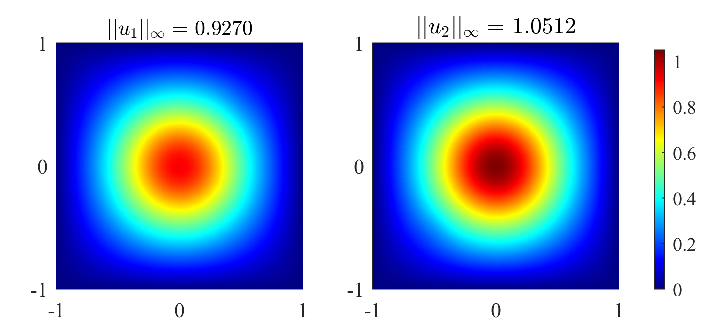}}
    \caption{Profiles of ground state solutions under the asymmetric Gaussian potential $\eqref{eq:Guass}$ for fixed $\varepsilon_1 = \varepsilon_2 = 1$, $g_{11} = 1$, $g_{12} = 10$, $g_{22} = 3$, $(x_1^c,y_1^c) = (0.5,0.5)$, $(x_2^c,y_2^c) = (-0.5,-0.5)$ and $w_1 = w_2 = 10$, with different $\delta_i$.}
    \label{fig:Guass2}
    \end{minipage}
\quad
\end{figure}

\begin{figure}[!t]
\begin{minipage}{0.48\textwidth}
\centering
    \subfloat[ $w_1 = w_2 = 5$]{\includegraphics[width=0.99\linewidth]{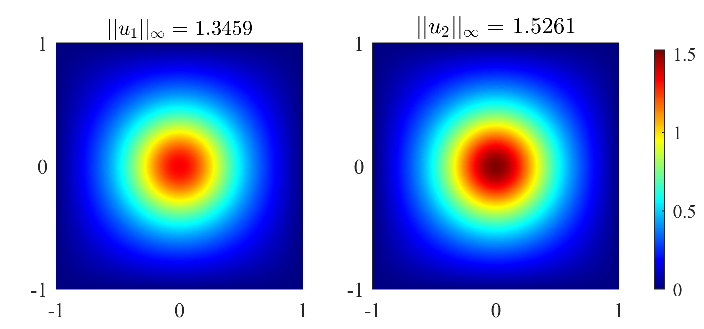}}\\
    \vspace{-1.5ex}
    \subfloat[$w_1 = w_2 = 6.5$]{\includegraphics[width=0.99\linewidth]{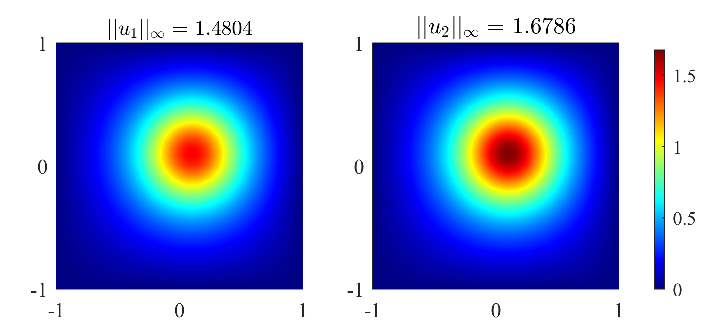}}\\
    \vspace{-1.5ex}
    \subfloat[ $w_1 = w_2 = 7$]{\includegraphics[width=0.99\linewidth]{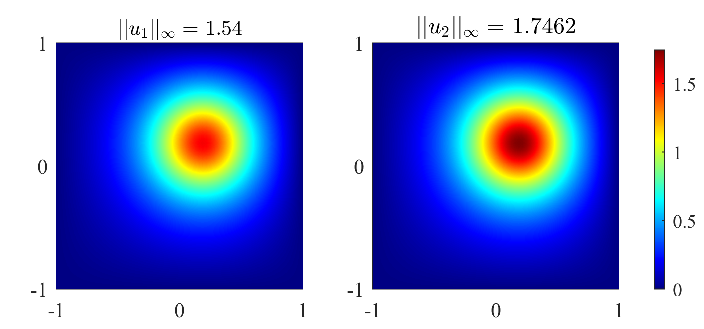}}
    \caption{Profiles of ground state solutions under the symmetric Gaussian potential $\eqref{eq:Guass}$ for fixed $\varepsilon_1 = \varepsilon_2 = 1$, $g_{11} = 1$, $g_{12} = 10$, $g_{22} = 3$, $(x_1^c,y_1^c) = (x_2^c,y_2^c) = (0,0)$,  and $\delta_1 = \delta_2 = 1$, with different $w_i$.}
    \label{fig:Guass4}
\end{minipage}
\quad
 \begin{minipage}{0.48\textwidth}
	\centering
	\subfloat[$\delta_1 = \delta_2 = 10$]{\includegraphics[width=0.99\linewidth]{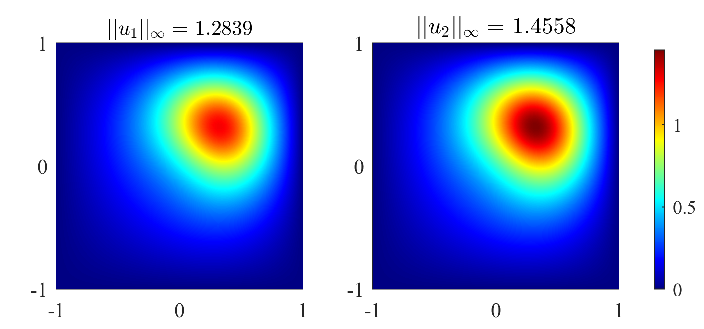}}\\
	\vspace{-1.5ex}
	\subfloat[$\delta_1 = \delta_2 = 30$]{\includegraphics[width=0.99\linewidth]{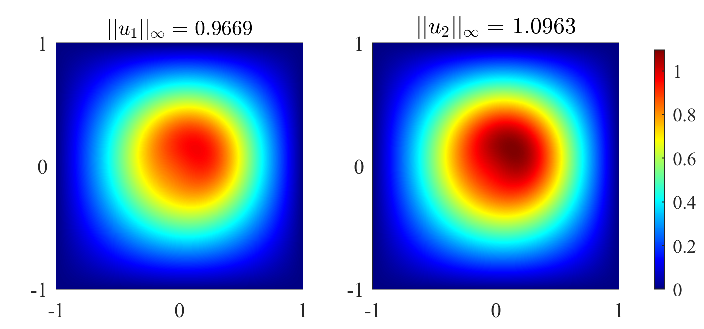}}\\
	\vspace{-1.5ex}
	\subfloat[$\delta_1 = \delta_2 = 35$.]{\includegraphics[width=0.99\linewidth]{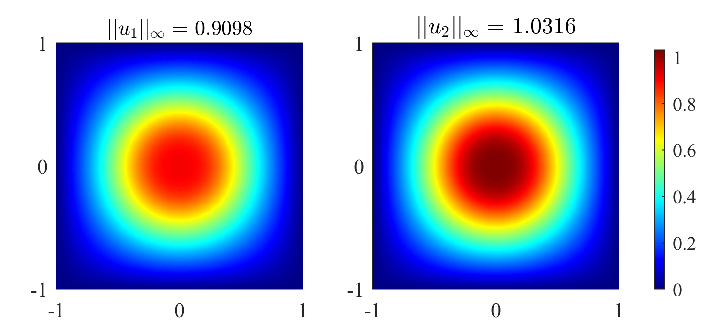}}
	\caption{Profiles of ground state solutions under the symmetric Gaussian potential $\eqref{eq:Guass}$ for fixed $\varepsilon_1 = \varepsilon_2 = 1$, $g_{11} = 1$, $g_{12} = 10$, $g_{22} = 3$, $(x_1^c,y_1^c) = (x_2^c,y_2^c) = (0,0)$ and $w_1 = w_2 = 10$, with different $\delta_i$.}
	\label{fig:Guass3}
\end{minipage}
\end{figure}

\subsubsection{Semi-trivial ground state solutions}
Fix $\varepsilon_i  = 1$ and $a_i(\mathbf{x}) = \omega_i$, with the constants $\omega_i>0$. The coupled  system~\eqref{eq:model} arises from nonlinear optic and the solutions are referred as the standing wave solutions or solitary waves of the coupled NLS system~\eqref{eq:NLSE}~\cite{MAIA2006JDE}. In this case, it is known that the coupled system~\eqref{eq:model} has semi-trivial solutions that has zero components. And it is widely concerned whether the ground state solutions is semi-trivial \cite{ANTONIO_NLSE_jlms2007,Wang_Birfu2010,CORREIA2016,CORREIA2016JFA,Lin2005,SOAVE2016,MAIA2006JDE}. 

We firstly compute the ground state solutions of the coupled system~\eqref{eq:model} with different $\omega_i$ and $g_{ij}$. Figures~\ref{fig:m-2_eqv_V}-\ref{fig:m-4_sol} show the profiles of the ground state solutions when $m=2$. It can be observed that for fixed $\omega_1$, $\omega_2$, $g_{11}$ and $g_{22}$, the ground state solution $\mathbf{u}_g$ is semi-trivial when $g_{12}$ is small, while it  becomes fully nontrivial when $g_{12}$ is large. We can observe that on the one hand, when $\omega_1 = \omega_2 =1$ and $1 = g_{11} < g_{22} = 2$, Figure~\ref{fig:m-2_eqv_V} shows that $\mathbf{u}_g$ is semi-trivial when $g_{12} = 1.8$, while it is fully nontrivial when $g_{12} = 2.2$. To observe the ground state solutions with $g_{12} =g_{22}= 2$ in the middle row of Figure~\ref{fig:m-2_eqv_V} whose max value is $9.6982\times 10^{-3}$ more carefully, we show the profiles by surface plot in Figure~\ref{fig:another surf_plot1}. We can guess that $g_{12} =g_{22}= 2$ nears a critical parameter value such that the ground state solutions changes from the semi-trivial case to the fully nontrivial case. Furthermore, we increase $g_{22}=10$. The results in Figure~\ref{fig:m-2_eqv_V2} as well as Figure~\ref{fig:another surf_plot2} show that the corresponding critical parameter value also increases to around $g_{12}= g_{22}=10$. On the other hand, when $1 = \omega_1 \neq w_2=2$, $g_{11} = 1$ and $g_{22} = 2$, Figures~\ref{fig:m-2_neqv_V1} and~\ref{fig:another surf_plot3} show that the critical parameter nears $g_{12} \approx 1.744$. Further set $g_{22}=4$, Figures~\ref{fig:m-2_neqv_V2} and~\ref{fig:another surf_plot4} show that the critical parameter value also increases, reaching approximately $g_{12} \approx 3.49$. In the literature, for unbounded domain $\Omega = \mathbb{R}^N$ ($N=2$, $3$) and $m=2$, Ambrosetti and Colorado have proved in \cite{ANTONIO_NLSE_jlms2007} that there exists $\Gamma'>\Gamma >0$ s.t. the ground state solutions are fully nontrivial  when $ g_{12}>\Gamma'$, while they are semi-trivial when $g_{12} < \Gamma$. The above numerical findings summarized in Table~\ref{tab:nontrivial} demonstrate that these theoretical results are valid for the bounded domain case which is still an open problem in the literature as far as we know. 

\begin{figure}[!t]
 \begin{minipage}{0.48\textwidth}
 	\centering
 	\subfloat[$ g_{12} = 1.8$]{\includegraphics[width=0.99\textwidth]{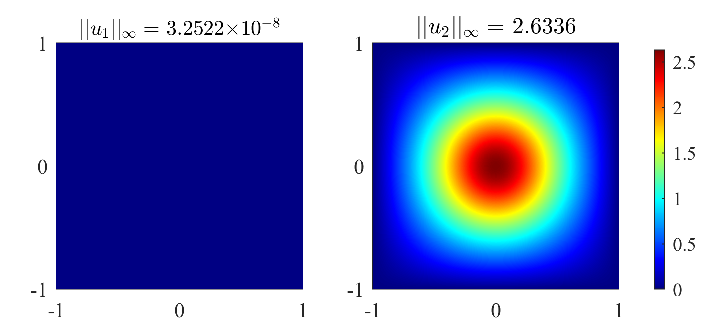}}
  \\
  \vspace{-1.5ex}
\subfloat[$ g_{12} = 2$]{\includegraphics[width=0.99\textwidth]{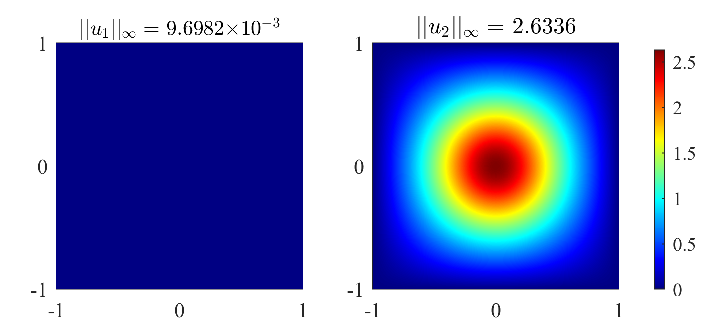}}
  \\
 \vspace{-1.5ex}
 	\subfloat[$ g_{12} = 2.2$]{\includegraphics[width=0.99\textwidth]{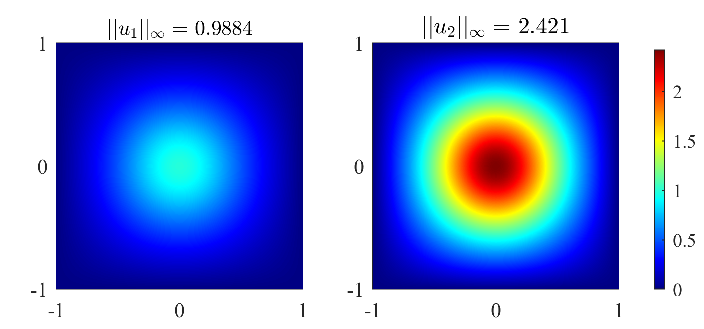}}
 	\caption{Profiles of the ground state solutions of the coupled system~\eqref{eq:model} when  $m=2$, $\varepsilon_1 \!=\! \varepsilon_2 \!=\! 1$, $\omega_1 \!=\!  \omega_2 \!=\!1$,  $ g_{11} \!=\! 1$ and $ g_{22} \!=\! 2$, with different $g_{12}$.}
 	\label{fig:m-2_eqv_V} 
    \end{minipage}
    \quad
\begin{minipage}{0.48\textwidth}
 	\centering
 	\subfloat[$ g_{12} = 9$]{\includegraphics[width=0.99\textwidth]{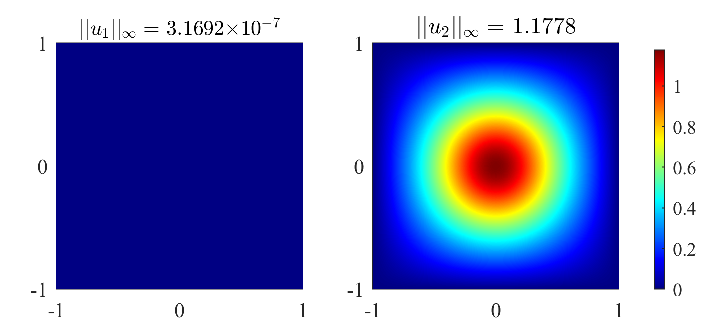}} \\ 
    \vspace{-1.5ex}
 	\subfloat[$ g_{12} = 10$]{\includegraphics[width=0.99\textwidth]{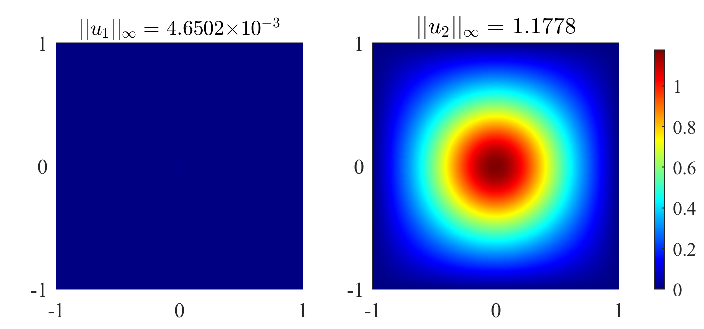}}\\
    \vspace{-1.5ex}
    \subfloat[$ g_{12} = 11$]{\includegraphics[width=0.99\textwidth]{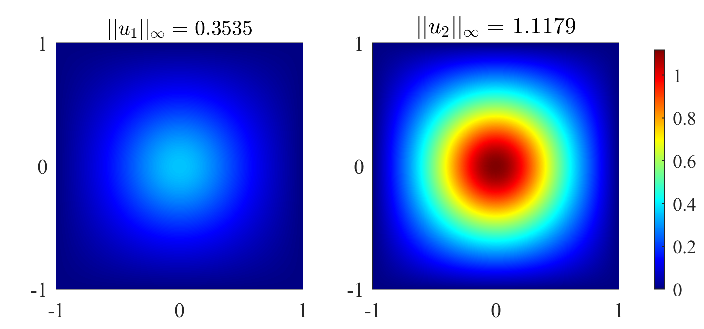}}
 	\caption{Profiles of the ground state solutions in ~\eqref{eq:model} when $m=2$, $\varepsilon_1 \!=\! \varepsilon_2 \!=\! 1$, $\omega_1  \!=\! \omega_2 \!=\!1$, $ g_{11} \!=\! 1$ and $ g_{22} \!=\! 10$, with different $g_{12}$.}
 	\label{fig:m-2_eqv_V2} 
 \end{minipage}
\end{figure}

\begin{figure}[!t]
 \begin{minipage}{0.48\textwidth}
     \centering
     \subfloat{\includegraphics[width=1. \linewidth]{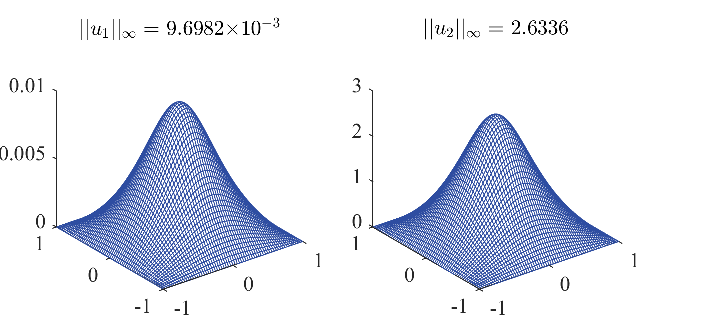}}
     \caption{ \hspace{-1em} Surface plot of the ground state solutions of the coupled system~\eqref{eq:model} when  $m=2$, $\varepsilon_1 \!=\! \varepsilon_2 \!=\! 1$, $\omega_1 \!=\! \omega_2 \!=\!1$, $ g_{11} \!=\! 1$, $ g_{22} \!=\! 2$ and $g_{12} \!=\! 2$. }
     \label{fig:another surf_plot1}
     \end{minipage}
     \quad
     \begin{minipage}{0.48\textwidth}
     \centering
     \subfloat{\includegraphics[width=1.\linewidth]{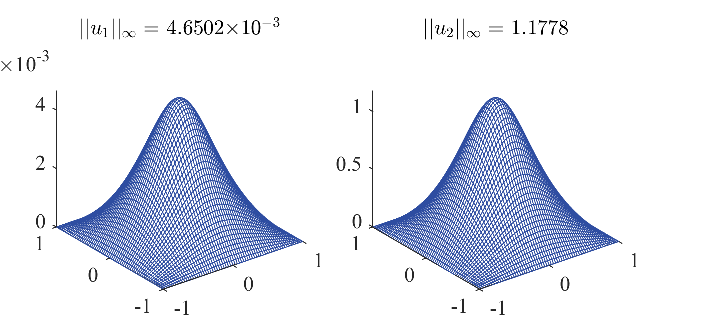}}
     \caption{ Surface plot of the ground state solutions of the coupled system~\eqref{eq:model} when  $m=2$, $\varepsilon_1 \!=\! \varepsilon_2 \!=\! 1$, $\omega_1 \!=\! \omega_2 \!=\!1$, $ g_{11} \!=\! 1$, $ g_{22} \!=\! 10$ and $g_{12} \!=\! 10$. }
     \label{fig:another surf_plot2}
     \end{minipage}
 \end{figure}

 \begin{figure}[!t]
 	\begin{minipage}{0.48\textwidth}
    \centering
 	\subfloat[$g_{12} = 1$]{\includegraphics[width=0.99\textwidth]{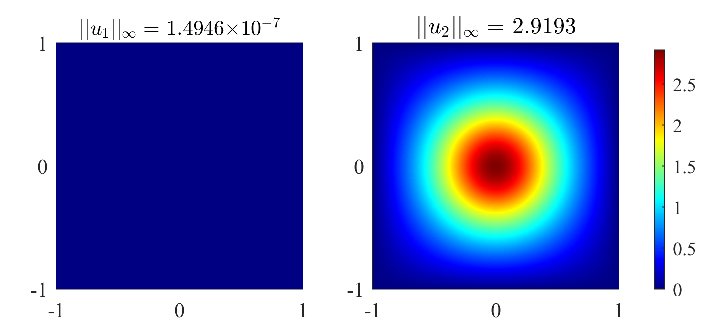}}
  \\
  \vspace{-1.5ex}
 	\subfloat[$g_{12} = 1.744$]{\includegraphics[width=0.99\textwidth]{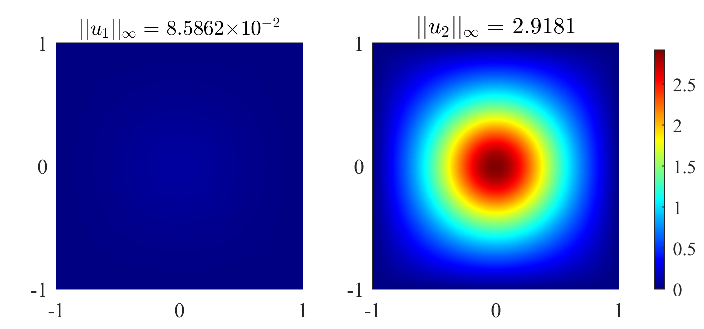}}
  \\
  \vspace{-1.5ex}
 	\subfloat[$g_{12} = 2$]{\includegraphics[width=0.99\textwidth]{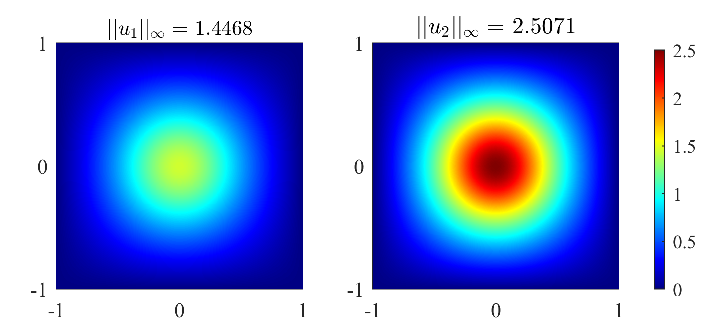}}
 	\caption{Profiles of the ground state solutions of ~\eqref{eq:model} when $m=2$, $\varepsilon_1 = \varepsilon_2 = 1$, $\omega_1 =1$, $ \omega_2 =2$,  $ g_{11} = 1$ and $ g_{22} = 2$ , with different $g_{12}$. }
 	\label{fig:m-2_neqv_V1} 
 	\end{minipage}
    \quad 
    \begin{minipage}{0.48\textwidth}
    \centering
        \subfloat[$g_{12} = 1$]{\includegraphics[width=0.99\textwidth]{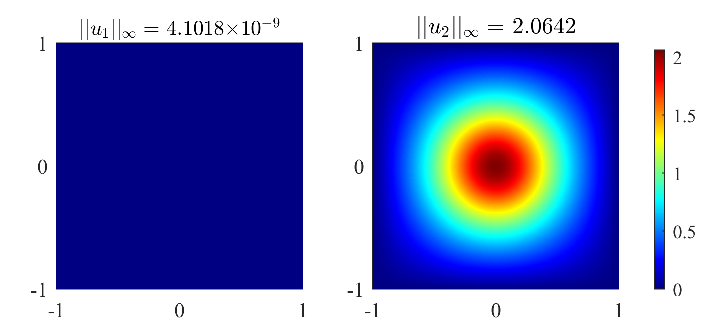}}
  \\
  \vspace{-1.5ex}
 	\subfloat[$g_{12} = 3.49$]{\includegraphics[width=0.99\textwidth]{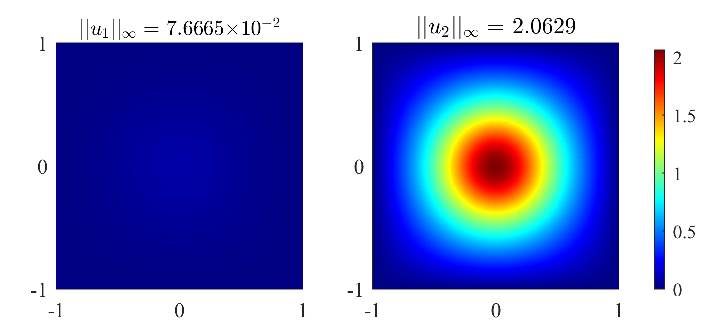}}
  \\
  \vspace{-1.5ex}
 	\subfloat[$g_{12} = 4$]{\includegraphics[width=0.99\textwidth]{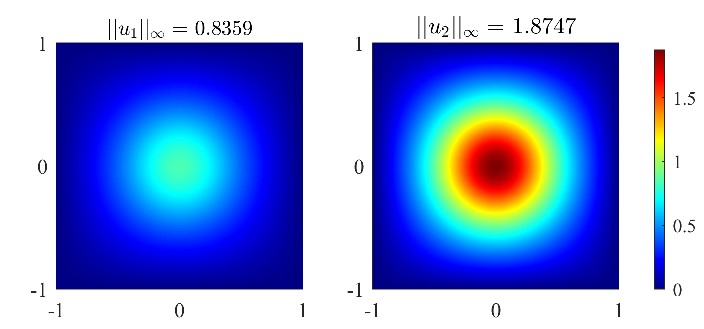}}
 	\caption{Profiles of the ground state solutions of the coupled system~\eqref{eq:model} when $m=2$, $\varepsilon_1 = \varepsilon_2 = 1$, $ \omega_1 =1$, $\omega_2 =2$,  $ g_{11} = 1$ and $ g_{22} = 4$, with different $g_{12}$.}
 	\label{fig:m-2_neqv_V2} 
    \end{minipage}
 \end{figure}
 \begin{figure}[!t]
 \begin{minipage}{0.48\textwidth}
     \centering
     \subfloat{\includegraphics[width=1. \linewidth]{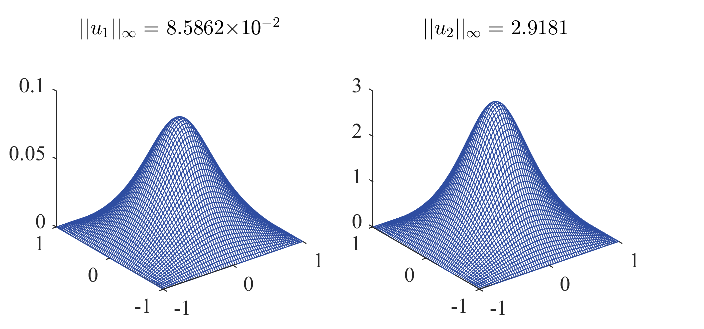}}
     \caption{ \hspace{-1em} Surface plot of the ground state solutions of the coupled system~\eqref{eq:model} when  $m=2$, $\varepsilon_1 \!=\! \varepsilon_2 \!=\! 1$, $\omega_1 =1, \omega_2 = 2$, $ g_{11} = 1$, $ g_{22} = 2$ and $g_{12}= 1.744$. }
     \label{fig:another surf_plot3}
     \end{minipage}
     \quad
     \begin{minipage}{0.48\textwidth}
     \centering
     \subfloat{\includegraphics[width=1.\linewidth]{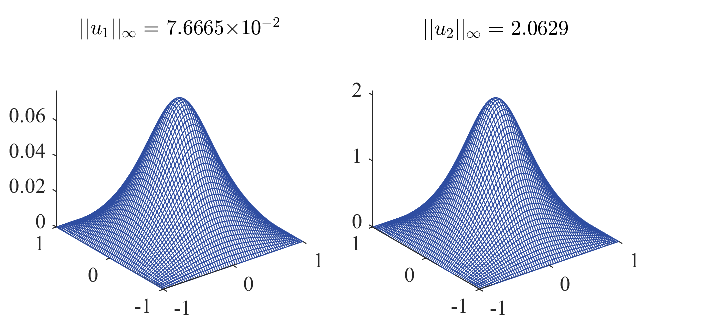}}
     \caption{ Surface plot of the ground state solutions of the coupled system~\eqref{eq:model} when  $m=2$, $\varepsilon_1 \!=\! \varepsilon_2 \!=\! 1$, $\omega_1 =1, \omega_2 = 2$, $ g_{11} = 1$, $ g_{22} = 4$ and $g_{12}= 3.49$. }
     \label{fig:another surf_plot4}
     \end{minipage}
 \end{figure}
 \begin{table}[!t]
\renewcommand{\arraystretch}{1.5}
\centering
\footnotesize
\caption{The summary of the semi-trivial/nontrivial ground state solutions shown in Figures~\ref{fig:m-2_eqv_V}-\ref{fig:another surf_plot4}, when $m=2$, $\varepsilon_1 = \varepsilon_2 = 1$, $a_1(\mathbf{x}) = \omega_1$ and $a_2(\mathbf{x}) = \omega_2$, with different coefficients $w_i$, $g_{22}$ or $g_{12}$.}
\vspace{2pt}
\label{tab:nontrivial}
\begin{tabular}{cccrrccl}
\hline
\multicolumn{1}{c}{\multirow{2}{*}{$\omega_1$}} &
  \multicolumn{1}{c}{\multirow{2}{*}{$\omega_2$}} &
  \multicolumn{1}{c}{\multirow{2}{*}{$g_{11}$}} &
  \multicolumn{1}{c}{\multirow{2}{*}{$g_{22}$}} &
  \multicolumn{1}{c}{\multirow{2}{*}{$g_{12}$}} &
  \multicolumn{3}{c}{$\mathbf{u}_g$} \\ \cline{6-8}
\multicolumn{1}{c}{} &
  \multicolumn{1}{c}{} &
  \multicolumn{1}{c}{} &
  \multicolumn{1}{c}{} &
  \multicolumn{1}{c}{} &
  \multicolumn{1}{c}{semi-trivial} &
  \multicolumn{1}{c}{fully nontrivial} &
  \multicolumn{1}{c}{profile} \\ \hline
1 & 1 & 1 & 2  & 1.8 & $\surd$ &         & Figure~\ref{fig:m-2_eqv_V}(a)  \\ \hline 
1 & 1 & 1 & 2  & 2   &         & $\surd$ & Figure~\ref{fig:m-2_eqv_V}(b)   \\ \hline
1 & 1 & 1 & 2  & 2.2 &         & $\surd$ & Figure~\ref{fig:m-2_eqv_V}(c)   \\ \hline
1 & 1 & 1 & 10 & 9   & $\surd$ &         & Figure~\ref{fig:m-2_eqv_V2}(a)  \\ \hline
1 & 1 & 1 & 10 & 10  &         & $\surd$ & Figure~\ref{fig:m-2_eqv_V2}(b)  \\ \hline
1 & 1 & 1 & 10 & 11  &         & $\surd$ & Figure~\ref{fig:m-2_eqv_V2}(c)  \\ \hline
1 & 2 & 1 & 2  &  1   & $\surd$ &         & Figure~\ref{fig:m-2_neqv_V1}(a) \\ \hline
1 & 2 & 1 & 2  & 1.744 &         & $\surd$ & Figure~\ref{fig:m-2_neqv_V1}(b) \\ \hline
1 & 2 & 1 & 2  & 2   &         & $\surd$ & Figure~\ref{fig:m-2_neqv_V1}(c) \\ \hline
1 & 2 & 1 & 4  & 1    & $\surd$ &         & Figure~\ref{fig:m-2_neqv_V2}(a) \\ \hline
1 & 2 & 1 & 4  & 3.49 &         & $\surd$ & Figure~\ref{fig:m-2_neqv_V2}(b) \\ \hline
1 & 2 & 1 & 4  & 4   &         & $\surd$ & Figure~\ref{fig:m-2_neqv_V2}(c) \\
\hline
\end{tabular}
\end{table}

Furthermore, we compute the ground state solutions of the coupled system~\eqref{eq:model} for $m=3$ and $m=4$, with results shown in Figure~\ref{fig:m-3_sol} and Figure~\ref{fig:m-4_sol}, respectively. When $m=3$, we can also observe from Figure~\ref{fig:m-3_sol} that, as $g_{23}$ increases,  the number of the zero components in the ground state solution decreases from two to one. When $g_{12}$ and $g_{13}$ also increase, a fully nontrivial ground state solution appears. The same phenomenon is observed in  Figure~\ref{fig:m-4_sol} when $m=4$.  The ground state solutions are fully nontrivial when all $g_{ij}$ are large enough. 

 \begin{figure}[!t]
	\centering
	\vspace{-1.5ex}
	\subfloat[$ g_{12}  = g_{13} =  g_{23} = 2$]{\includegraphics[width=0.68\textwidth]{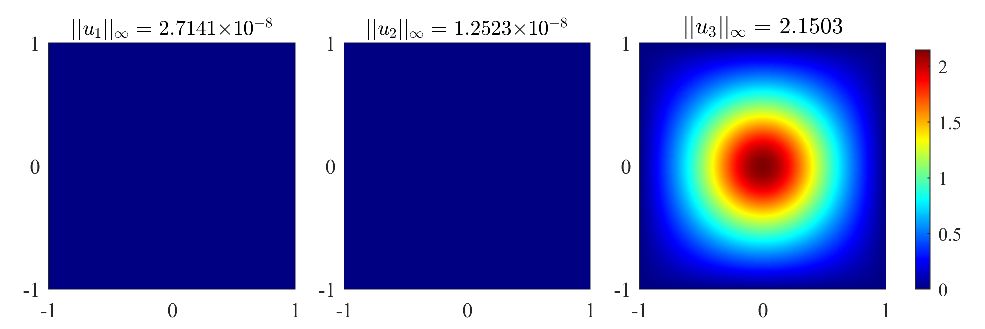}}\\
    \vspace{-1.5ex}
 	\subfloat[$ g_{12}  =  g_{13} = 2,  g_{23} = 10$]{\includegraphics[width=0.68\textwidth]{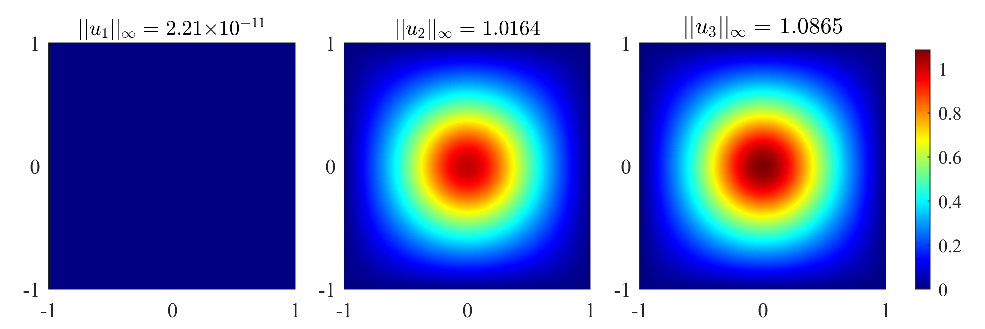}}\\
    \vspace{-1.5ex}
 	\subfloat[$ g_{12} =  g_{13} =  g_{23} = 10$]{\includegraphics[width=0.68\textwidth]{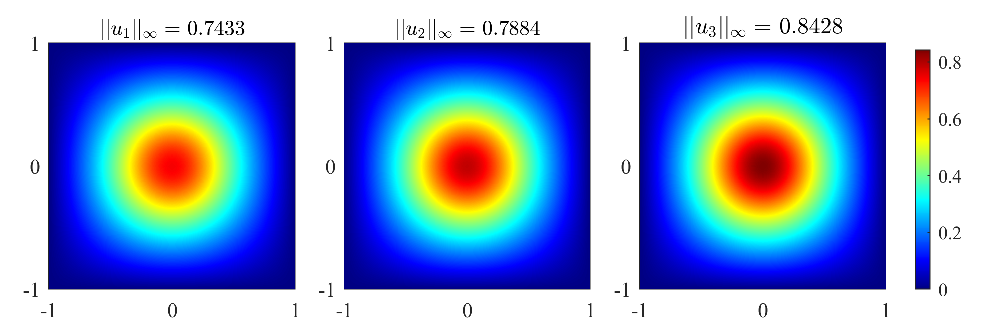}}
	\caption{Profiles of the ground state solutions of the coupled system~\eqref{eq:model} with $m=3$, when $\varepsilon_i  = \omega_i = 1 (i=1, 2, 3)$, $ g_{11} = 1$, $g_{22} = 2$ and $ g_{33} = 3$.}
	\label{fig:m-3_sol} 
\end{figure}

\begin{figure}[!t]
	\centering
	\vspace{-1ex}
	\subfloat[$ g_{12} =  g_{13} =  g_{14} = 2$, $ g_{23} =  g_{24} =  3$ and $ g_{34} = 2$]{\includegraphics[width=0.99\textwidth]{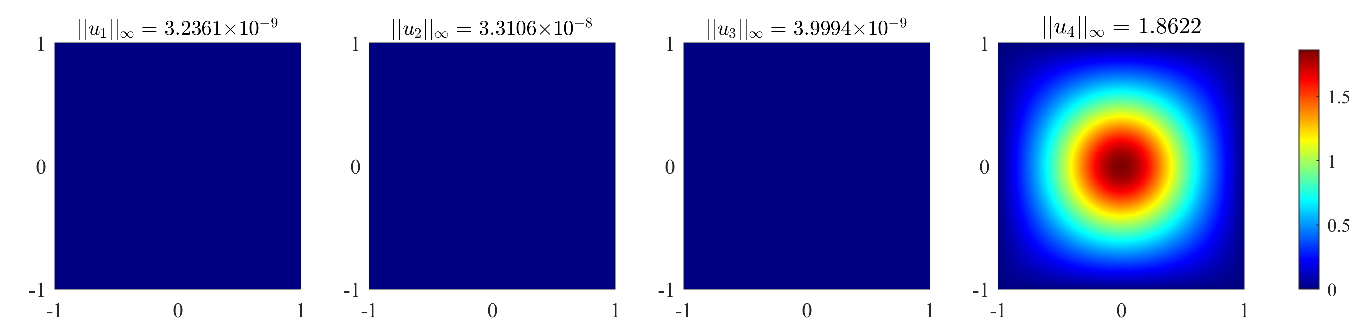}}\\
    \vspace{-1.5ex}
 	\subfloat[$ g_{12} =  g_{13} =  g_{14} = 10$, $ g_{23} =  g_{24} =3$ and $ g_{34} = 2$]{\includegraphics[width=0.99\textwidth]{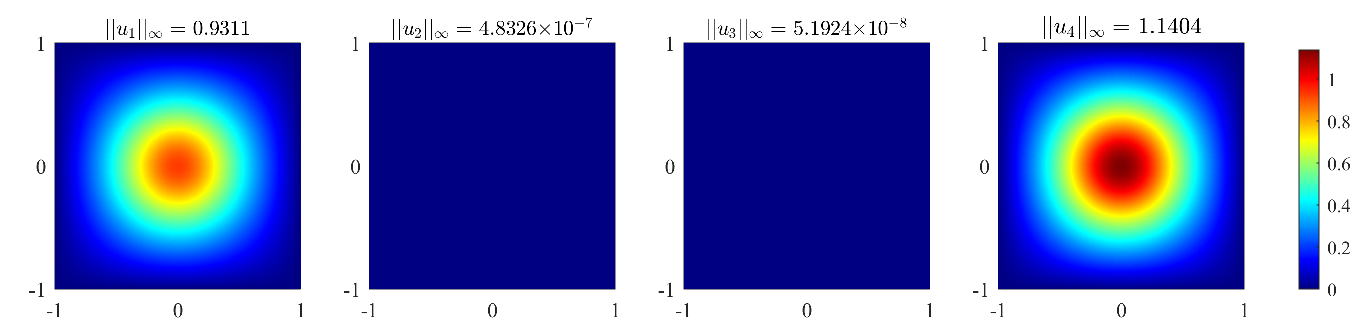}}\\
    \vspace{-1.5ex}
 	\subfloat[$ g_{12} =  g_{13} =  g_{14} = 10$, $ g_{23} =  g_{24} = 10$ and $ g_{34} = 2$]{\includegraphics[width=0.99\textwidth]{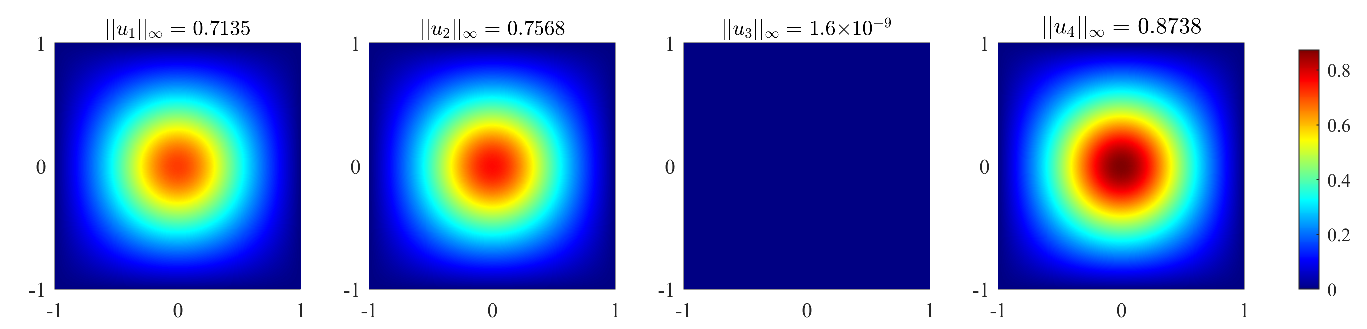}}\\
    \vspace{-1.5ex}
    \subfloat[ $ g_{12} =  g_{13} =  g_{14} = 10$, $ g_{23} =  g_{24} = 10$, $ g_{34} = 10$]{\includegraphics[width=0.99\textwidth]{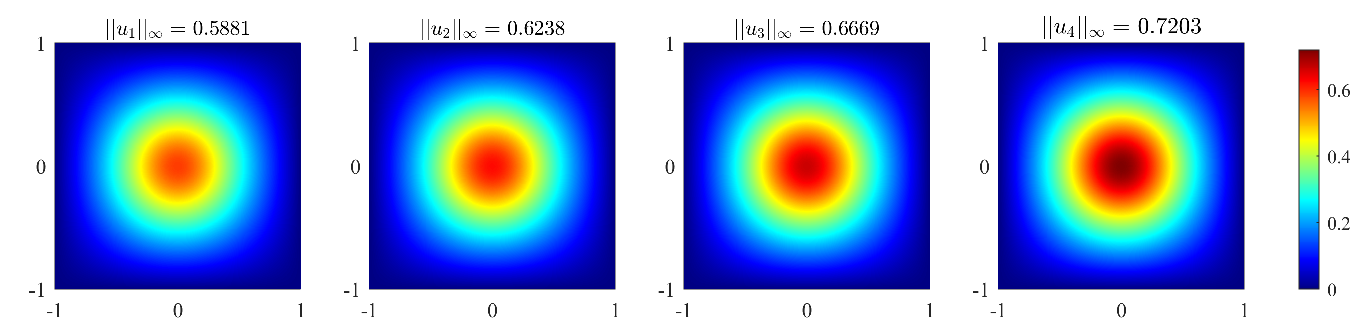}}
		\caption{Profiles of the ground state solutions of the coupled system~\eqref{eq:model} with $m=4$, when $\varepsilon_i = \omega_i =1 (i=1, 2, 3, 4)$, $ g_{11} = 1$, $ g_{22}  = 2$,  $g_{33} = 3$ and $ g_{44} = 4$.}
	\label{fig:m-4_sol} 
\end{figure}

\subsubsection{Sharp peaks ground state solutions with small diffusion coefficients \texorpdfstring{$\varepsilon_i$}{varepsiloni}}
Finally, we consider sharp peaks ground state solutions of the coupled system~\eqref{eq:model} with small diffusion coefficients $\varepsilon_i$ in this subsection, then it is a singularly perturbed problem. For $m=2$, choose the harmonic oscillator potential
\begin{align}\label{eq:poten_2}
    a_1(\mathbf{x}) = a_2(\mathbf{x}) = x^2 +y^2 +1. 
 \end{align}
Since the solutions have sharp peaks for small $\varepsilon_i$, we discrete the subproblems \eqref{eq: R_grad_N} by the sine pseudospectral method with a smaller mesh-size $1/64$. The results for $\varepsilon_i = 0.1$, $0.01$, $0.001$ with $g_{11} = 1$, $g_{12} = 10$ and  $g_{22} = 3$ are plotted in Figure~\ref{fig:perturbed1}. 
It can be observed that the peaks of the ground state solution become sharper and the max value of the ground state solution depicts a decreasing trend w.r.t. the decreasing of $\varepsilon_i$.  

  \begin{figure}[!t]
 		\subfloat[$\varepsilon_1 = 
    \varepsilon_2 = 0.1$]{\includegraphics[width=0.48\linewidth]{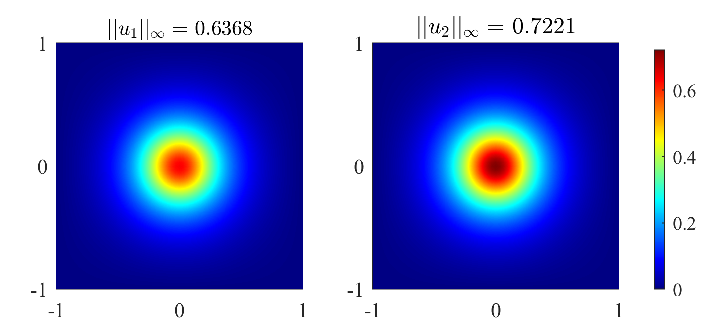}}
 		\quad \quad
 		\subfloat[$\varepsilon_1 = 
    \varepsilon_2 = 0.1$]{\includegraphics[width=0.48\linewidth]{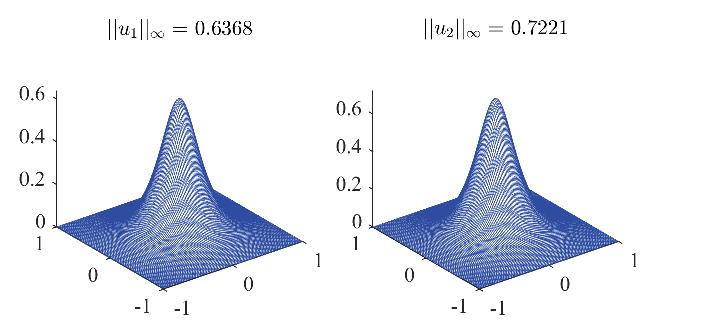}}
 
 		\subfloat[$\varepsilon_1 = 
    \varepsilon_2 = 0.01$]{\includegraphics[width=0.48\linewidth]{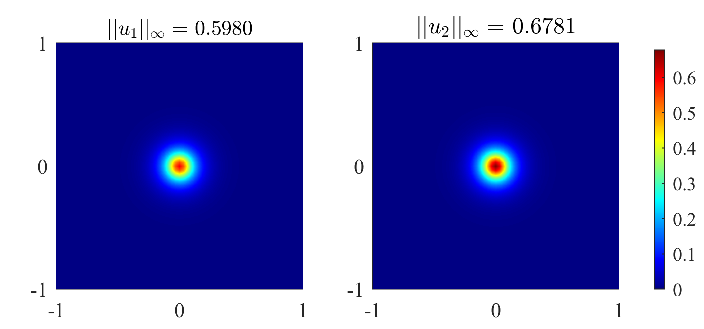}}
 		\quad \quad
 		\subfloat[$\varepsilon_1 = 
    \varepsilon_2 = 0.01$]{\includegraphics[width=0.48\linewidth]{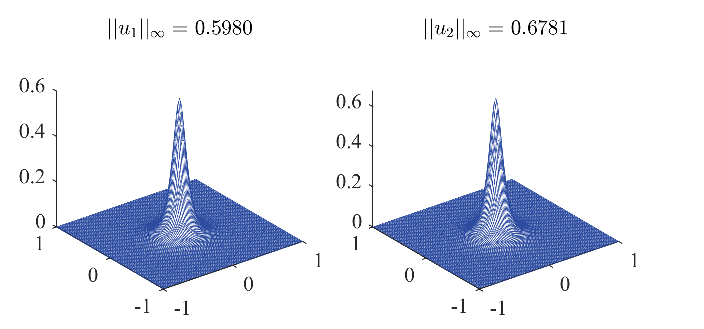}}
 	
 		\subfloat[$\varepsilon_1 = 
    \varepsilon_2 = 0.001$]{\includegraphics[width=0.48\linewidth]{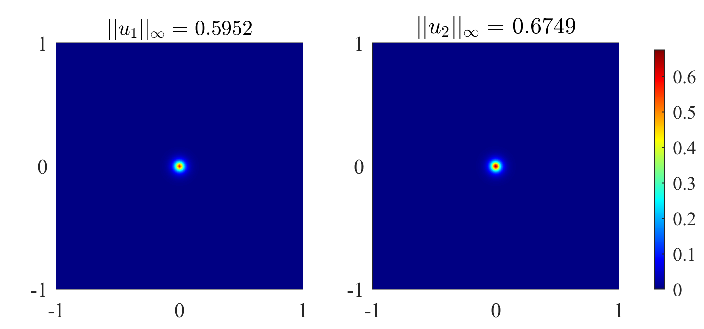}}
 		\quad \quad
 		\subfloat[$\varepsilon_1 = 
    \varepsilon_2 = 0.001$]{\includegraphics[width=0.48\linewidth]{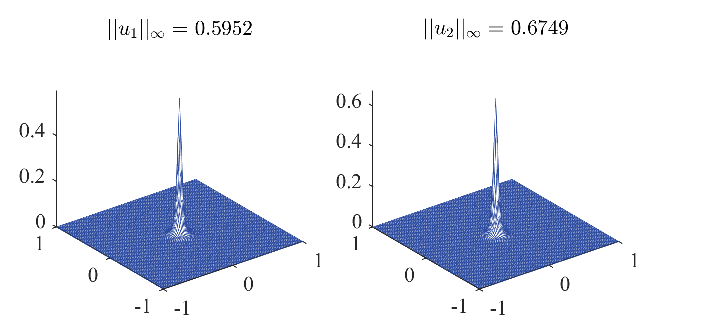}}
 	
 		\caption{The profiles (left) and surface plots (right) of the ground state solution to the coupled system~\eqref{eq:model} under the harmonic oscillator potential~\eqref{eq:poten_2} when $ g_{11} = 1$, $g_{12} = 10$ and $g_{22} = 3$, with different small $\varepsilon_i$.}
 		\label{fig:perturbed1}
 \end{figure}

\section{Concluding remarks}\label{sec:concluding}
In this paper, we proposed two efficient Riemannian accelerated gradient algorithms for computing the ground state solutions of a class of coupled semilinear elliptic system~\eqref{eq:model} based on the Nehari manifold optimization method. Inspired by the Nesterov accelerated gradient method, an easy-to-implement nonlinear extrapolation on $\mathcal{N}$ was incorporated into the Riemannian steepest-descent algorithm of NMOM, and the nonmonotone step-size search rule was further embedded. Numerical tests demonstrated that the proposed algorithms are significantly more efficient than the traditional Riemannian steepest-descent algorithm. In particular, we explored the properties of the ground state solutions under different parameter settings, revealing interesting phenomena that differ from those observed in single-equation cases. It is worth to mention that the current work is applicable to compute the ground state solutions that there $\mathrm{MI}=1$. In the future, it would be worth to extend these methods to compute the multiple solutions with higher Morse index.

\section*{Acknowledgment}
The work of Z.~Chen and Z.~Xie was supported by the NSFC grant 12171148 and the Major Program of Xiangjiang Laboratory, China (No.~22XJ01013). The work of W.~Liu was supported by NSFC grant 12571448 and the Innovation Research Foundation of NUDT (No.~202402-YJRC-XX-002). The work of W.~Yi was supported by the NSFC grant 12471374, Hunan Provincial Natural Science Foundation (No.~2025JJ40001), and the Key Scientific Research Project of the Education Department of Hunan Province (No.~23A0034). Part of this work was done when W.~Yi was visiting Department of Mathematics, National University of Singapore under the support of the China Scholar Council.

\appendix
\renewcommand{\thesection}{\Alph{section}}

\section{Proof of \texorpdfstring{Lemma~\ref{lem:Hgrad-vs-Rgrad-norm}}{Lemma~{ref{lem:Hgrad-vs-Rgrad-norm}}}}\label{sec:proof-lem:Hgrad-vs-Rgrad-norm}
For all $\mathbf{u}\in\mathcal{N}$, the $H$-orthogonal projection \eqref{eq:Rgrad} implies immediately $\left(\nabla G(\mathbf{u}),\nabla_{\mathcal{N}}E(\mathbf{u})\right)_H=0$ and $\left\|\nabla_{\mathcal{N}}E(\mathbf{u}) \right\|_H^2\leq \left\| \nabla E(\mathbf{u})\right\|_H^2$. The left estimate in \eqref{eq:Hgrad-vs-Rgrad-norm} is obtained, and the rest is to verify the right estimate in \eqref{eq:Hgrad-vs-Rgrad-norm}. Taking the $H$-inner product with $\mathbf{u}$ on both sides of \eqref{eq:Rgrad} and noticing $(\nabla E(\mathbf{u}), \mathbf{u})_H =\langle E'(\mathbf{u}),\mathbf{u} \rangle= 0$, one gets
    \begin{align}\label{eq:Rgrad-u}
        \left(\nabla_{\mathcal{N}}E(\mathbf{u}), \mathbf{u} \right)_H = - \frac{(\nabla E(\mathbf{u}),\nabla G(\mathbf{u}))_H}{\|\nabla G(\mathbf{u})\|_H^2} \left( \nabla G(\mathbf{u}), \mathbf{u}\right)_H.
    \end{align}
Noting that $\left( \nabla G(\mathbf{u}), \mathbf{u} \right)_H=\langle E''(\mathbf{u})\mathbf{u},\mathbf{u}\rangle=-2K(\mathbf{u})$, the combination of \eqref{eq:Rgrad} and \eqref{eq:Rgrad-u} implies
\begin{align}\label{eq:Hgrad-vs-Rgrad}
  \nabla E(\mathbf{u}) =\nabla_{\mathcal{N}}E(\mathbf{u}) + \frac{\left(\nabla_{\mathcal{N}}E(\mathbf{u}), \mathbf{u} \right)_H}{2K(\mathbf{u})} \nabla G(\mathbf{u}).
\end{align}
To estimate the right-hand side of \eqref{eq:Hgrad-vs-Rgrad}, we denote $\bm{\phi}:=\nabla G(\mathbf{u})=(\phi_1,\phi_2,\ldots,\phi_m)\in H$. By Cauchy-Schwarz inequality, H\"older inequality and Sobolev embedding inequalities, one has
\begin{align*}
\|\nabla G(\mathbf{u})\|_H^2
&=\langle G'(\mathbf{u}), \bm{\phi} \rangle 
= 2 \int_{\Omega }\sum_{i=1}^m \left(\varepsilon_i\nabla u_{i} \cdot \nabla \phi_{i} + a_iu_{i} \phi_{i} \right) d\mathbf{x} - 4\int_{\Omega } \sum_{i,j=1}^m  g_{ij}u_{j}^2u_{i} \phi_{i} d\mathbf{x} \\
&\leq 2 \|\mathbf{u}\|_H\|\bm{\phi}\|_H + 2 \|\mathbf{a}\|_{\infty}\sum_{i=1}^m \|u_{i}\|_{L^2} \|\phi_{i}\|_{L^2} + 4\|\mathbf{g}\|_{\infty} \sum_{i,j=1}^m  \|u_{j}\|_{L^4}^2\|u_{i}\|_{L^4} \|\phi_{i}\|_{L^4} \\
& \leq 2 \|\mathbf{u}\|_H\|\bm{\phi}\|_H + C_1\|\mathbf{a}\|_{\infty}\|\mathbf{u}\|_H\|\bm{\phi}\|_H + C_2\|\mathbf{g}\|_{\infty}\|\mathbf{u}\|_H^3\|\bm{\phi}\|_H,
\end{align*}
where $C_1,C_2>0$ depend on parameters $\{\varepsilon_i\}$ and the constants in Sobolev embeddings $H_0^1(\Omega) \hookrightarrow L^2(\Omega)$ and $H_0^1(\Omega) \hookrightarrow L^4(\Omega)$, respectively. It follows that
\begin{align}\label{eq:DGnorm-by-u}
\|\nabla G(\mathbf{u})\|_H = \|\bm{\phi}\|_H \leq \tilde{C} \left(\|\mathbf{u}\|_H +\|\mathbf{u}\|_H^3 \right),
\end{align}
with $\tilde{C} = \max\{ 2+C_1\|\mathbf{a}\|_{\infty}, C_2\|\mathbf{g}\|_{\infty} \}$. 
Then, from \eqref{eq:Hgrad-vs-Rgrad}, \eqref{eq:DGnorm-by-u} and the fact that $4E(\mathbf{u})=K(\mathbf{u})\geq c_1 \|\mathbf{u}\|_H^2$, one obtains
\begin{align*}
  \left\|\nabla E(\mathbf{u}) \right\|_H
\leq \left\|\nabla_{\mathcal{N}}E(\mathbf{u})\right\|_H\left(1+\frac{\|\mathbf{u}\|_H\left\|\nabla G(\mathbf{u}) \right\|_H}{2K(\mathbf{u})} \right) 
  \leq \left\|\nabla_{\mathcal{N}}E(\mathbf{u})\right\|_H\left(1 + \frac{\tilde{C}}{2c_1} +\frac{2\tilde{C}}{c_1^2}E(\mathbf{u}) \right).
\end{align*}
Taking $C = \max\big\{1 + \tilde{C}/(2c_1),{2\tilde{C}}/{c_1^2}\big\}$ completes the proof.

\section{Sine pseudospectral discretization}\label{sec:Discre}
Here, we present the details of the sine pseudospectral discretization. Without loss of generality, we consider the 2D rectangle domain $\Omega = (-L,L)^2$, $L>0$. Let $M$ be an even number and $h=2L/M$ be the mesh size. Define the grid points $\{(x_k,y_l)\}_{k,l=0}^M$, where $x_k = -L+kh$ and $y_l = -L+lh$ ($k,l=0,1,\ldots,M$). Let ${u}_{kl}$ be the numerical approximation of the function ${u}$ at $(x_k, y_l)$. The sine pseudospectral approximation of $-\Delta{u}$ is given by
\begin{align}
    (-\Delta_h {u})_{kl} = \sum_{p,q=1}^{M-1} \left(\left(\varrho_p^x\right) ^2+ \left(\varrho_q^y\right)^2 \right)\widehat{{u}}_{pq}\sin{\frac{p k\pi}{M}}\sin{\frac{q k\pi}{M}},
\end{align}
where $\varrho_p^x = \frac{p \pi}{2L},\; \varrho_q^y = \frac{q \pi}{2L}$, and $\widehat{{u}}_{pq}$ are coefficients of the discrete sine transform (DST) of ${u}_{kl}$, i.e.,
\begin{align}
    \widehat{{u}}_{pq} = \frac{4}{N^2}\sum_{k,l=1}^{M-1} {u}_{kl} \sin{\frac{p k\pi}{M}}\sin{\frac{q l\pi }{M}}.
\end{align}

The discrete version of relevant quantities in Algorithms~\ref{al:RNAG}-\ref{al:modified RNAG} are described as below. 
\begin{itemize}
    \item For $\mathbf{u} = (u_1,u_2,\ldots,u_m)$ and $\mathbf{v} = (v_1,v_2,\ldots,v_m)$, the discrete $H$-inner product and discrete $H$-norm are given as
\begin{align}\label{eq:discrete-norm}
(\mathbf{u},\mathbf{v})_h=h^2 
    \sum_{i=1}^m \sum_{k,l=1}^{M-1}  \varepsilon_i(-\Delta_h u_i )_{kl}(v_i)_{kl},\quad \|\mathbf{u}\|_h^2 =  \sqrt{(\mathbf{u},\mathbf{v})_h}.
\end{align}

\item The discrete Nehari retraction of $\mathbf{u} \in \mathcal{N}$ and $\boldsymbol{\xi} \in T_{\mathbf{u}}\mathcal{N}$ is implemented based on the explicit expression \eqref{eq:Nehari retraction} with the following discrete functionals $K_h(\mathbf{v})$ and $I_h(\mathbf{v})$ at $\mathbf{v} =\mathbf{u}+\boldsymbol{\xi}$:
\begin{align*}
    K_h(\mathbf{v}) =  h^2 
    \sum_{i=1}^m \sum_{k,l=1}^{M-1}  \left(\varepsilon_i(-\Delta_h v_i )_{kl}(v_i)_{kl} + (a_i)_{kl} (v_i)_{kl}^2 \right), \quad
    I_h(\mathbf{v}) = {\sum_{i,j=1}^m\sum_{k,l=1}^{M-1}  g_{ij} (v_i)_{kl}^2 (v_j)_{kl}^2 },
\end{align*}
where $(a_i)_{kl}=a_i(x_k,y_l)$. 

\item The discrete RSD direction is implemented based on \eqref{eq:eta-SD} with the discrete inner product \eqref{eq:discrete-norm} and the explicit sine pseudospectral solutions to Poisson equations in \eqref{eq: R_grad_N}:
\begin{align*}
    (\psi_i)_{kl} &= (u_i)_{kl} + \sum_{p,q=1}^{M-1} \left(\frac{(\widehat{a_iu_i})_{pq} -\sum_{j=1}^m g_{ij}(\widehat{u_j^2u_i})_{pq}}{\varepsilon_i\left((\varrho_x^p)^2 + (\varrho_y^q)^2\right)}\sin{\frac{p k\pi}{M}}\sin{\frac{q l\pi}{M}}\right),\quad i=1,2,\ldots,m, \\
(\varphi_i)_{kl} &= 2(u_i)_{kl} + \sum_{p,q=1}^{M-1} \left(\frac{2(\widehat{a_iu_i})_{pq} - 4\sum_{j=1}^m  g_{ij}(\widehat{u_j^2u_i})_{pq}}{\varepsilon_i\left((\varrho_x^p)^2 + (\varrho_y^q)^2\right)}\sin{\frac{p k\pi}{M}}\sin{\frac{q l\pi}{M}}\right), \quad i=1,2,\ldots,m.
\end{align*}
\end{itemize}
It is noted that the (inverse) DST can be implemented by fast Fourier transform (FFT) with the $O(M^2\log M)$ computational complexity.

\footnotesize
\bibliographystyle{abbrv}
\bibliography{ref}

\begin{thebibliography}{10}

\bibitem{Absil2008}
P.-A. Absil, R.~Mahony, and R.~Sepulchre.
\newblock {\em Optimization Algorithms on Matrix Manifolds}.
\newblock Princeton University Press, Princeton, NJ, 2008.

\bibitem{PRL_Akhmediev1999}
N.~Akhmediev and A.~Ankiewicz.
\newblock Partially coherent solitons on a finite background.
\newblock {\em Phys. Rev. Lett.}, 82:2661--2664, 1999.

\bibitem{ANTONIO_NLSE_jlms2007}
A.~Ambrosetti and E.~Colorado.
\newblock {Standing waves of some coupled nonlinear {Schr\"odinger} equations}.
\newblock {\em J. Lond. Math. Soc.}, 75(1):67--82, 2007.

\bibitem{Bao2004}
W.~Bao.
\newblock Ground states and dynamics of multicomponent {Bose--Einstein}
  condensates.
\newblock {\em Multiscale Model. Simul.}, 2(2):210--236, 2004.

\bibitem{2013BaoBEC}
W.~Bao and Y.~Cai.
\newblock Mathematical theory and numerical methods for {Bose-Einstein}
  condensation.
\newblock {\em Kinet. Relat. Models.}, 6:1--135, 2013.

\bibitem{Bao-Du2004}
W.~Bao and Q.~Du.
\newblock Computing the ground state solution of {B}ose--{E}instein condensates
  by a normalized gradient flow.
\newblock {\em SIAM J. Sci. Comput.}, 25(5):1674--1697, 2004.

\bibitem{Wang_Birfu2010}
T.~Bartsch, N.~Dancer, and Z.-Q. Wang.
\newblock A {L}iouville theorem, a-priori bounds, and bifurcating branches of
  positive solutions for a nonlinear elliptic system.
\newblock {\em Calc. Var. Partial Differ. Equ.}, 37:345–361, 2010.

\bibitem{Berestycki1983I}
H.~Berestycki and P.~L. Lions.
\newblock Nonlinear scalar field equaitons. {I}. {E}xistence of a ground state.
\newblock {\em Arch. Ration. Mech. Anal}, 82(4):313–345, 1983.

\bibitem{Boumal2023}
N.~Boumal.
\newblock {\em An Introduction to Optimization on Smooth Manifolds}.
\newblock Cambridge University Press, Cambridge, UK, 2023.

\bibitem{1994Infinite}
K.-C. Chang.
\newblock {\em Infinite Dimensional Morse Theory and Multiple Solution
  Problems}.
\newblock Birkh\"auser, Boston, MA, 1993.

\bibitem{ChenHu2008}
C.-N. Chen and X.~Hu.
\newblock Stability criteria for reaction-diffusion systems with skew-gradient
  structure.
\newblock {\em Comm. Partial Differ. Equ.}, 33(2):189--208, 2008.

\bibitem{CHEN_JCP2023}
H.~Chen, G.~Dong, W.~Liu, and Z.~Xie.
\newblock Second-order flows for computing the ground states of rotating
  {Bose-Einstein} condensates.
\newblock {\em J. Comput. Phys.}, 475:111872, 2023.

\bibitem{CHEN2010}
X.~Chen and J.~Zhou.
\newblock A local min-max-orthogonal method for finding multiple solutions to
  noncooperative elliptic systems.
\newblock {\em Math. Comp.}, 79(272):2213--2236, 2010.

\bibitem{CHEN2008}
X.~Chen, J.~Zhou, and X.~Yao.
\newblock A numerical method for finding multiple co-existing solutions to
  nonlinear cooperative systems.
\newblock {\em Appl. Numer. Math.}, 58(11):1614--1627, 2008.

\bibitem{CLXY2025SISC}
Z.~Chen, W.~Liu, Z.~Xie, and W.~Yi.
\newblock {Nehari} manifold optimization and its application for finding
  unstable solutions of semilinear elliptic {PDE}s.
\newblock {\em SIAM J. Sci. Comput.}, 47(4):A2098--A2126, 2025.

\bibitem{CORREIA2016}
S.~Correia.
\newblock Characterization of ground-states for a system of {M} coupled
  semilinear {S}chr\"odinger equations and applications.
\newblock {\em J. Differ. Equ.}, 260(4):3302--3326, 2016.

\bibitem{CORREIA2016JFA}
S.~Correia, F.~Oliveira, and H.~Tavares.
\newblock Semitrivial vs. fully nontrivial ground states in cooperative cubic
  {S}chr\"odinger systems with d $\geq$ 3 equations.
\newblock {\em J. Funct. Anal.}, 271(8):2247--2273, 2016.

\bibitem{DANCER2010953}
E.~N. Dancer, J.~Wei, and T.~Weth.
\newblock A priori bounds versus multiple existence of positive solutions for a
  nonlinear {Schr\"odinger system}.
\newblock {\em Ann. Inst. H. Poincar\'e Anal. Non Lin\'eaire}, 27(3):953--969,
  2010.

\bibitem{Djairo1998}
D.~G. {de Figueiredo} and J.~Yang.
\newblock Decay, symmetry and existence of solutions of semilinear elliptic
  systems.
\newblock {\em Nonlinear Anal., T.M.A.}, 33(3):211--234, 1998.

\bibitem{Jackson-PRL-1998}
B.~Jackson, J.~F. McCann, and C.~S. Adams.
\newblock Vortex formation in dilute inhomogeneous {Bose-Einstein} condensates.
\newblock {\em Phys. Rev. Lett.}, 80:3903--3906, 1998.

\bibitem{Kielhofer1974}
H.~Kielhöfer.
\newblock Stability and semilinear evolution equations in {Hilbert} space.
\newblock {\em Arch. Ration. Mech. Anal.}, 57:150--165, 1974.

\bibitem{1995differential}
S.~Lang.
\newblock {\em Differential and Riemannian Manifolds}.
\newblock Springer-Verlag, New York, 1995.

\bibitem{APG2015}
H.~Li and Z.~Lin.
\newblock Accelerated proximal gradient methods for nonconvex programming.
\newblock In {\em Adv. Neural Inf. Process. Syst.}, volume~28, page 379–387,
  2015.

\bibitem{Lin2005}
T.~Lin and J.~Wei.
\newblock Ground state of {N} coupled nonlinear {Schr\"odinger} equations in
  {$\mathbb{R}^n$}, $n\leq 3$.
\newblock {\em Comm. Math. Phys.}, 255(3):629–653, 2005.

\bibitem{LYZ2023SISC}
W.~Liu, Y.~Yuan, and X.~Zhao.
\newblock Computing the action ground state for the rotating nonlinear
  {Schr\"odinger} equation.
\newblock {\em SIAM J. Sci. Comput.}, 45(2):A397--A426, 2023.

\bibitem{Liu2017RAG}
Y.~Liu, F.~Shang, J.~Cheng, H.~Cheng, and L.~Jiao.
\newblock Accelerated first-order methods for geodesically convex optimization
  on {R}iemannian manifolds.
\newblock In {\em Adv. Neural Inf. Process. Syst.}, volume~30, page
  4868–4877, 2017.

\bibitem{MAIA2006JDE}
L.~Maia, E.~Montefusco, and B.~Pellacci.
\newblock Positive solutions for a weakly coupled nonlinear {Schr\"odinger}
  system.
\newblock {\em J. Differ. Equ.}, 229(2):743--767, 2006.

\bibitem{manakov1974theory}
S.~V. Manakov.
\newblock On the theory of two-dimensional stationary self-focusing of
  electromagnetic waves.
\newblock {\em Sov. Phys. JETP}, 38(2):248--253, 1974.

\bibitem{1960On}
Z.~Nehari.
\newblock On a class of nonlinear second order differential equations.
\newblock {\em Trans. Amer. Math. Soc.}, 95:101--123, 1960.

\bibitem{1961Characteristic}
Z.~Nehari.
\newblock Characteristic values associated with a class of nonlinear
  second-order differential equations.
\newblock {\em Acta Math.}, 105:141--175, 1961.

\bibitem{NAG}
Y.~Nesterov.
\newblock A method of solving a convex programming problem with convergence
  rate {$O(1/k^2)$}.
\newblock {\em Sov. Math. Dokl.}, 27(2):372--376, 1983.

\bibitem{Palais1963}
R.~S. Palais.
\newblock Morse theory on {Hilbert} manifolds.
\newblock {\em Topology}, 2(4):299--340, 1963.

\bibitem{Rabinowitz1986}
P.~H. Rabinowitz.
\newblock {\em Minimax Methods in Critical Point Theory with Applications to
  Differential Equations}.
\newblock Number~65 in CBMS Reg. Conf. Ser. Math. AMS, Providence, RI, 1986.

\bibitem{Sato2015}
Y.~Sato and Z.-Q. Wang.
\newblock Multiple positive solutions for {Schr\"odinger} systems with mixed
  couplings.
\newblock {\em Calc. Var. Partial Differ. Equ.}, 54:1373–1392, 2015.

\bibitem{Sirakov2007}
B.~Sirakov.
\newblock Least energy solitary waves for a system of nonlinear
  {Schr\"{o}dinger} equations in $\mathbb{R}^n$.
\newblock {\em Commun. Math. Phys.}, 271:199--221, 2007.

\bibitem{SOAVE2015}
N.~Soave.
\newblock On existence and phase separation of solitary waves for nonlinear
  {S}chr{\"o}dinger systems modelling simultaneous cooperation and competition.
\newblock {\em Calc. Var. Partial Differ. Equ.}, 53:689–718, 2015.

\bibitem{SOAVE2016}
N.~Soave and H.~Tavares.
\newblock New existence and symmetry results for least energy positive
  solutions of {Schr\"odinger} systems with mixed competition and cooperation
  terms.
\newblock {\em J. Differ. Equ.}, 261(1):505--537, 2016.

\bibitem{SuWeijie-2016}
W.~Su, S.~Boyd, and E.~J. Cand{{\`e}}s.
\newblock A differential equation for modeling {N}esterov's accelerated
  gradient method: Theory and insights.
\newblock {\em J. Mach. Learn. Res.}, 17(153):1--43, 2016.

\bibitem{2010Themethod}
A.~Szulkin and T.~Weth.
\newblock The method of {Nehari} manifold.
\newblock In {\em Handbook of Nonconvex Analysis and Applications}, pages
  597--632. International Press, Boston, 2010.

\bibitem{shabat1972exact}
V.~E. Zakharov and A.~B. Shabat.
\newblock Exact theory of two-dimensional self-focusing and one-dimensional
  self-modulation of waves in nonlinear media.
\newblock {\em Sov. Phys. JETP}, 34(1):62--69, 1972.

\bibitem{Hongchao2004A}
H.~Zhang and W.~W. Hager.
\newblock A nonmonotone line search technique and its application to
  unconstrained optimization.
\newblock {\em SIAM J. Optim.}, 14(4):1043--1056, 2004.

\bibitem{Zhang_pmlr2018}
H.~Zhang and S.~Sra.
\newblock An estimate sequence for geodesically convex optimization.
\newblock In {\em Proceedings of the 31st Conference On Learning Theory},
  volume~75, pages 1703--1723, 2018.

\bibitem{YuBo2020}
X.~Zhang, J.~Zhang, and B.~Yu.
\newblock Finding multiple solutions to elliptic systems with polynomial
  nonlinearity.
\newblock {\em Numer. Meth. Partial. Differ. Equ.}, 36(5):1074--1097, 2020.

\end{thebibliography}

\end{document}